\documentclass{amsart}

\usepackage[utf8]{inputenc}
\usepackage[T1]{fontenc}
\usepackage[english]{babel}

\usepackage{fancyhdr}
\usepackage{graphicx}
\usepackage{float}
\usepackage{hyperref}
\usepackage{todonotes}
\usepackage{amsmath}
\usepackage{amssymb}
\usepackage{amsfonts}
\usepackage{amsthm}
\usepackage{mathrsfs}
\usepackage{verbatim}
\usepackage{breqn}
\usepackage{soul}
\usepackage{tikz-cd}
\usepackage{mathtools}
\usepackage{extpfeil}
\usepackage{caption}
\usepackage{subcaption}
\usepackage{extpfeil}
\usetikzlibrary{decorations.pathreplacing,angles,quotes}

\newtheorem{mainthm}{Theorem}

\newtheorem{mainthmTwo}{Theorem}

\newtheorem{thm}{Theorem}
\numberwithin{thm}{subsection}
\numberwithin{equation}{section}

\newtheorem{lem}[thm]{Lemma}
\newtheorem{rmk}[thm]{Remark}
\newtheorem{prop}[thm]{Proposition}

\newtheorem{cor}[thm]{Corollary}
\theoremstyle{definition}

\newcommand{\mult}{\mathrm{mult}}
\newcommand{\lcm}{\mathrm{lcm}}

\newcommand{\C}{\mathbb{C}}
\newcommand{\Ct}{\mathbb{C}{(\!(\!\hspace{0.7pt}t\hspace{0.7pt}\!)\!)}}
\newcommand{\Cu}{\mathbb{C}{(\!(\!\hspace{0.7pt}u\hspace{0.7pt}\!)\!)}}
\newcommand{\D}{\mathbb{D}}
\newcommand{\Q}{\mathbb{Q}}

\newcommand{\X}{\mathscr{X}}

\renewcommand{\O}{\mathcal{O}}
\newcommand{\an}{\mathrm{an}}

\newcommand{\R}{\mathbb{R}}
\newcommand{\Log}{\mathrm{Log}}
\newcommand{\hyb}{\mathrm{hyb}}
\newcommand{\N}{\mathbb{N}}

\renewcommand{\L}{\mathcal{L}}

\newcommand{\Spec}{\mathrm{Spec \ }}

\renewcommand{\div}{\mathrm{div}}
\newcommand{\red}{\mathrm{red}}

\newcommand{\Y}{\mathscr{Y}}

\renewcommand{\tilde}{\widetilde}
\newcommand{\I}{\mathbf{I}}

\newcommand{\CC}{\mathrm{CC}}

\title{Convergence of Bergman measures towards the Zhang measure}
\author{Sanal Shivaprasad}
\date{\today}

%

\begin{document}
\maketitle

\begin{abstract}
We prove a folklore conjecture that the Bergman measure along a holomorphic family of curves parametrized by the punctured unit disk converges to the Zhang measure on the associated Berkovich space. The convergence takes place on a Berkovich  hybrid space. We also study the convergence of the Bergman measure to a measure on a metrized curve complex in the sense of Amini and Baker. 
\end{abstract}

\section{Introduction}
Any compact Riemann surface $Y$ of genus $g \geq 1$ carries a canonical measure  called the \emph{Bergman measure}, defined as follows. Note that there is a positive definite Hermitian metric on $H^0(Y,\Omega_Y)$, the $g$-dimensional complex vector space of holomorphic 1-forms on $Y$, given by
$$ \langle \phi, \psi \rangle = \frac{i}{2}\int_Y \phi \wedge \overline{\psi}.$$
Pick an orthonormal basis $\phi_1,\dots,\phi_g$ with respect to the above pairing. Then, the positive $(1,1)$-form defined by $\frac{i}{2}\sum_{i=1}^g \phi_i \wedge \overline{\phi_i}$ does not depend on the choice of the orthonormal basis and is called the \emph{Bergman metric} on $Y$ and the associated measure on $Y$ is called as the Bergman measure on $Y$. We can also define the Bergman metric on $Y$ as the pullback of the flat metric from the Jacobian of $Y$ along the Abel-Jacobi map.

The Bergman measure has many applications. For example, the variation of the Bergman measure gives rise to a metric on the Teichm{\"u}ller space of genus $g$ curves for $g \geq 2$ that is invariant under the action of the  mapping class group  \cite{HJ98}.

We would like to understand how the Bergman measure varies in a degenerating holomorphic family of curves. %
This variation has been studied in the literature in many cases \cite{HJ96} \cite{Don15} \cite{dJon19}. 
In this paper, we would like to give the variation of Bergman measures a non-Archimedean interpretation. 

Let $X$ be a complex surface with a holomorphic submersion $X \to \D^*$ with fibers being compact complex curves of genus at least 1. For $t \in \D^*$, let $\mu_t$ denote the Bergman measure on the fiber $X_t$. We would like to understand the convergence of the measures $\mu_t$ on the family $X$.

On $X$, the measures $\mu_t$ converge weakly to the zero measure as $t \to 0$ because there is no fiber over the puncture. This is not very interesting. A way to remedy this issue would be to add a fiber over the puncture to compactify the family over the origin.

One such partial compactification is the `hybrid space', $X^\hyb$, constructed by Berkovich in \cite{Ber09}. 
Here the \emph{central fiber} (i.e.~the fiber over $t=0$) is  $X_{\Ct}^\an$, the Berkovich space obtained by the analytification of $X$ viewed as a variety over the Laurent series field $\Ct$ \cite{Ber90}. The hybrid space has since then been used to study non-Archimedean degenerations \cite{BJ17}, \cite{Oda17}, \cite{Sus18}, \cite{LS19}, \cite{Sch19}, \cite{Shi19} and problems in dynamics \cite{Fav18}, \cite{DF19}, \cite{DKY19}.
When $X$ is a family of curves, the associated Berkovich space can be seen as an inverse limit of a certain family of metric graphs.
These graphs are dual graphs of normal crossing models of $X$. 
For more details, see Section \ref{subsecDualGraph}.

The \emph{Zhang measure} on a metric graph is a weighted sum of Lebesgue measures on edges and Dirac masses on vertices. It was introduced by Zhang in \cite{Zha93} to define a non-Archimedean analogue of the Bergman pairing on a Riemann surface. 
The Zhang measure has been used in the study of potential theory on the Berkovich projective line \cite{BR10}.
The weight of the Zhang measure on an edge is a function involving the length of the edge and the resistance across the endpoints after removing the the edge from the graph. The weight of the Zhang measure on a vertex is the genus of the irreducible component associated to it. 

The Zhang measures on the dual graphs of all normal crossing models of $X$ are compatible and thus give rise to a measure on $X_\Ct^\an$. 

There are several reasons to believe that the Zhang measure is the non-Archimedean analogue of the Bergman measure. 
Firstly, the Weierstrass points on a Riemann surface are equidistributed with respect to the Bergman measure \cite{Nee84}. It is possible to define Weierstrass points on a Berkovich curve or on a  metric graph and it turns out that they are equidistributed with respect to the Zhang measure \cite{Ami14},\cite{Ric18}. Secondly, recall that the Bergman measure can be obtained as a pullback of the flat metric from the Jacobian under the Abel-Jacobi map. Similarly, the Zhang measure can be realized as the pullback of a certain canonical metric on the tropical Jacobian under the tropical Abel-Jacobi map \cite{BF11}.
Thirdly, a version of Kazhdan's theorem for the Bergman measure on a Riemann surface is true for the Zhang measure on a metric graph \cite{SW19}. 

Indeed, it is a folklore conjecture that the Bergman measure converges to the Zhang measure in the hybrid space setting. For example, see \cite[Section 1.1]{SW19}. Our main result gives a positive answer to this conjecture. 
\begin{mainthm}
\label{mainThm1}
The Bergman measure $\mu_t$ on the fiber $X_t$ converges weakly to a measure  $\mu_0$ on the Berkovich space $X_\Ct^\an$, where the convergence takes place on the hybrid space $X^\hyb$.
The measure $\mu_0$ is supported on a subspace of $X^\an_\Ct$ that is isomorphic to a metric graph, and is a weighted sum of Lebesgue measures on edges and Dirac masses on points. 

Moreover, if we assume that $X$ has a semistable model, then $\mu_0$ is the Zhang measure on the Berkovich space $X_\Ct^\an$
\end{mainthm}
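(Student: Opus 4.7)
My plan is to pass to a semistable model via base change and then perform an asymptotic analysis of the Bergman pairing organised by the dual graph of the special fiber. By the semistable reduction theorem, after a finite base change $u \mapsto u^n = t$ the family admits a semistable model $\tilde{\X} \to \D$, and since the Bergman measure is intrinsic to each smooth fiber it is preserved by the resulting isomorphism of generic fibers. Proving weak convergence of the Bergman measures on the hybrid space $\tilde{\X}^\hyb$ would then transfer to weak convergence of $\mu_t$ on $\X^\hyb$ via the natural finite continuous map $\tilde{\X}^\hyb \to \X^\hyb$, with the limit supported on the image of the skeleton, a metric graph. This reduces the theorem to the case where $\X \to \D$ is itself semistable, where the assertion becomes that the limit is the Zhang measure on the skeleton.

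Assume then that $\X \to \D$ is semistable, with special fiber $\X_0 = \bigcup_v C_v$ and dual graph $G$. Holomorphic 1-forms on the fibers extend to sections of the relative dualising sheaf $\omega_{\X/\D}$, whose restriction to $\X_0$ fits in the standard exact sequence
\[
0 \to \bigoplus_v H^0(C_v, \Omega_{C_v}) \to H^0\bigl(\X_0, \omega_{\X/\D}|_{\X_0}\bigr) \xrightarrow{\Res} H^1(G,\C) \to 0,
\]
whose two outer dimensions sum to the total genus $g$. I would lift this to a basis of sections of $\omega_{\X/\D}$ over $\D$ comprising vertex forms $\phi_v$ restricting on $\X_0$ to bases of the $H^0(C_v, \Omega_{C_v})$, together with edge-type forms $\psi_c$ indexed by a basis of $H^1(G,\C)$ whose prescribed residues at the nodes realise the given cohomology class; near a node $xy = t$ such a form is locally modelled on $\pm \frac{dx}{x} = \mp \frac{dy}{y}$. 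The $L^2$-pairing on $X_t$ is then block-asymptotic in this basis: the vertex--vertex block converges to the Bergman pairings on the individual components, while the edge-type block is dominated by integration on the degenerating annuli $\{|t|^{1/2} \le |x| \le 1\}$ at the nodes, and Hodge-theoretic estimates of nilpotent-orbit type control the off-diagonal blocks. The polar-coordinate identity $\frac{i}{2}\frac{dx}{x} \wedge \frac{d\bar{x}}{\bar{x}} = \frac{dr}{r} \wedge d\theta$ makes both the diverging mass of order $|\log|t||$ and the tropicalisation onto the associated edges of $G$ explicit.

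After Gram--Schmidt with respect to this pairing, the Bergman density $\frac{i}{2}\sum_i \phi_i \wedge \overline{\phi_i}$ splits into a vertex piece, which concentrates on each $C_v$ with total mass tending to $g_v$, and an edge piece, which concentrates on the degenerating annuli around the nodes. Under the hybrid map, the former collapses to Dirac masses of weight $g_v$ at the corresponding vertices of $G$; the latter, by the polar identity above, tropicalises to a multiple of Lebesgue measure on each edge of $G$, the multiplier being extracted from the inverse of the edge-type Gram matrix. A standard harmonic-analysis computation on $G$ then identifies these multipliers with the Zhang edge weights, which are expressed in terms of edge lengths and effective resistances, completing the identification of $\mu_0$ with the Zhang measure.

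The main obstacle lies in the second step: obtaining sharp leading-order asymptotics for the period Gram matrix of the chosen basis together with uniform off-diagonal bounds strong enough to survive matrix inversion, and then promoting the pointwise convergence of densities on smooth fibers to genuine weak convergence in the hybrid topology on $\X^\hyb$, which intertwines the Archimedean structure on $\X$ with the non-Archimedean structure on $\X^\an_\Ct$ through the $\Log$-map on annular neighbourhoods of the skeleton. This last transition requires simultaneous control of the smooth-fiber asymptotics and of the test functions in the hybrid topology, and is where the proof should be most delicate.
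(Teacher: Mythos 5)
Your overall strategy---reduce to a semistable model via base change, pick a basis of relative forms adapted to the residue exact sequence, estimate the period Gram matrix block-by-block, and match edge weights with Zhang weights---is the same as the paper's. Two points deserve attention, the second of which is a genuine gap.

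First, the paper deliberately avoids the Hodge-theoretic/nilpotent-orbit estimates you invoke for the off-diagonal blocks. Instead, it first chooses the basis $\psi_1,\dots,\psi_g$ of $H^0(\X_{0,\red},\omega_{\X_{0,\red}})$ (restrictions of the lifts $\theta_i$ from Lemma~\ref{lemGrauert}) so that the residue vectors of $\psi_1,\dots,\psi_{g'}$ form an orthonormal basis of $\Omega(\Gamma_\X)$ for the length-weighted pairing $\langle\omega,\omega'\rangle=\sum_e\omega(e)\overline{\omega'(e)}\,l_e$, while $\psi_{g'+1},\dots,\psi_g$ are orthonormal holomorphic forms on $\tilde{\X_{0,\red}}$ (Lemma~\ref{lemHermitianPairing}); a direct local computation in adapted charts then yields $B=2\pi\log|t|^{-1}\I_{g'}+O(1)$, $C,D=O(1)$, $F\to\I_{g-g'}$ (Lemma~\ref{lemAsymptotics}), with no variation-of-Hodge-structure input. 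This particular normalization is also what makes the Baker--Faber identity $\sum_{j\le g'}|C_j^P|^2 = 1/(l_{e_P}+r_{e_P})$ (Proposition~\ref{propBF11}) produce the Zhang edge weight immediately; your "standard harmonic-analysis computation on $G$" is exactly this identity and should be named, since your plan of doing Gram--Schmidt on a basis with merely prescribed cohomology classes leaves you with an arbitrary Gram matrix on the residue space that you must still diagonalize and reinterpret.

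Second, and more substantively, the plan as written only proves convergence on $\X^\hyb$ for a fixed nc model (i.e.\ Theorem~\ref{mainThm2}), whereas Theorem~\ref{mainThm1} is a statement on $X^\hyb=\varprojlim_{\X}\X^\hyb$ with central fiber $X_\Ct^\an=\varprojlim_{\X}\Gamma_\X$. To conclude, you must show that the Zhang measures on the $\Gamma_\X$ form a compatible system: for $\X'\geq\X$, the pushforward under the retraction $\Gamma_{\X'}\to\Gamma_\X$ of the Zhang measure on $\Gamma_{\X'}$ equals the Zhang measure on $\Gamma_\X$. This follows from the invariance of the Zhang measure under edge subdivision and under attaching a pendant bridge (the two elementary blowup moves on dual graphs), but it is a separate verification, not a formal consequence of the single-model convergence. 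Without it, the phrase "the limit supported on the image of the skeleton" does not yet pin down a well-defined Radon measure on the Berkovich space. Finally, you also need a short ground-field-extension argument in the non-semistable case to identify $\mu_0$ with the pushforward of the Zhang measure from $Y_\Cu^\an$ along $Y^\hyb\to X^\hyb$; this is why the unconditional part of the theorem only asserts a weighted sum of Lebesgue and Dirac measures on a metric graph, reserving the literal Zhang identification for the semistable case.
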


In the above theorem, the existence of a semistable model is asking for a normal crossing model $\X$ of $X$ such that $\X_0$ is reduced. Such a model always exists after performing a finite base change $\D^* \to \D^*$ given by $u \mapsto t^n$.

O. Amini has informed us that he and N. Nicolussi have been able to obtain a version of Theorem \ref{mainThm1} using techniques from Hodge theory. 

A key step involved in the proof of Theorem \ref{mainThm1} is to prove the convergence on the hybrid space $\X^\hyb = X \sqcup \Gamma_\X$, associated to a fixed normal crossing model $\X$ of $X$. See Section \ref{secHybridSpace} for details on the topology of the space $\X^\hyb$. 

\begin{mainthmTwo}
\label{mainThm2}
Suppose that $X$ has semistable reduction and let $\X$ be a normal crossing model of $X$. On the space $\X^\hyb$, the measures $\mu_t$ converge weakly to the Zhang measure on $\Gamma_\X$.
\end{mainthmTwo}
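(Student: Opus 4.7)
The plan is to reduce, via the canonical retraction $\tau_t : X_t \to \Gamma_\X$ associated to the semistable model $\X$, to computing the weak limit of the pushforward $(\tau_t)_*\mu_t$ on the finite metric graph $\Gamma_\X$. Any continuous test function $f$ on $\X^\hyb$ differs from $f\circ \tau_t$ by a quantity vanishing uniformly on $X_t$ as $t\to 0$ (by definition of the hybrid topology), so it suffices to prove that $(\tau_t)_*\mu_t$ converges weakly to the Zhang measure on $\Gamma_\X$. I would decompose $X_t$ into thick tubular neighborhoods of the components $C_i \subset \X_0$ (retracted by $\tau_t$ to the vertices $v_i$) and thin annular neighborhoods of the nodes of the form $\{|t|\le |x|\le 1\}$ for the semistable local equation $xy=t$, retracted linearly onto the corresponding edge via $\sigma = \log(1/|x|)/\log(1/|t|) \in [0,1]$, and then analyze the two types of contributions separately.

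The analytic input comes from the relative dualizing sheaf $\omega_{\X/\D}$, which is locally free of rank one with pushforward a locally free $\O_\D$-module of rank $g$. I would choose an extending basis $s_1,\dots,s_g \in H^0(\X, \omega_{\X/\D})$ compatible with the residue exact sequence
\[
0 \to \bigoplus_i H^0(C_i, \Omega_{C_i}) \to H^0(\X_0,\omega_{\X_0}) \xrightarrow{\res} H_1(\Gamma_\X, \C) \to 0,
\]
so that $s_1,\dots,s_{h^1}$ (with $h^1 = g - \sum_i g(C_i)$) have residue vectors $a_i = (a_{i,\alpha})_\alpha$ forming a real basis of $H_1(\Gamma_\X,\R)$, while $s_{h^1+1},\dots,s_g$ restrict on $\X_0$ to pure holomorphic forms on the normalization. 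Near each node, $s_i = a_{i,\alpha}(t)\,\tfrac{dx}{x} + (\text{holomorphic})$, and the standard computation $\int_{|t|\le |x|\le 1}(i/2)(dx/x)\wedge\overline{(dx/x)} = 2\pi\log(1/|t|)$ yields the block asymptotic expansion of the Gram matrix
\[
H(t) = \begin{pmatrix} 2\pi\log(1/|t|)\,R + O(1) & O(1) \\ O(1) & H_\infty + O(1) \end{pmatrix},
\]
where $R = A^T A$ with $A_{\alpha i} = a_{i,\alpha}$ is the residue Gram matrix (invertible by the linear independence of the cycle basis) and $H_\infty$ is the positive-definite Bergman pairing of the pure forms on the normalization.

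Block inversion gives that $H(t)^{-1}$ has upper-left block $(2\pi\log(1/|t|))^{-1}R^{-1} + O(\log^{-2})$, lower-right block $H_\infty^{-1} + O(\log^{-1})$, and off-diagonal blocks $O(\log^{-1})$. Substituting into $\mu_t = (i/2)\sum_{i,j}(H(t)^{-1})_{ij}\,s_i\wedge \overline{s_j}$, the thick neighborhood of a component $C_i$ receives a bounded contribution only from the lower-right block (the residue-block sections are bounded there but damped by the $1/\log(1/|t|)$ prefactor), which converges to the classical Bergman measure on $C_i$ of total mass $g(C_i)$ and pushes forward under $\tau_t$ to $g(C_i)\,\delta_{v_i}$. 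On the thin annulus around a node $\alpha$, only the upper-left block contributes to leading order; passing to cylindrical coordinates $x = e^{-s+i\theta}$ with $s \in [0,\log(1/|t|)]$, integrating out $\theta$, and rescaling to $\sigma = s/\log(1/|t|)$ converts the leading contribution into $(AR^{-1}A^T)_{\alpha\alpha}\,d\sigma$ on the unit-length edge $e_\alpha$.

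The main obstacle is the identification $(AR^{-1}A^T)_{\alpha\alpha} = 1/(1+r_\alpha)$ of this Bergman limit density with the Zhang density on the unit-length edge, where $r_\alpha$ denotes the effective resistance in $\Gamma_\X\setminus e_\alpha$ between the endpoints of $e_\alpha$. Since $R = A^T A$, the matrix $A(A^T A)^{-1}A^T$ is the orthogonal projector in $\R^{\#\text{nodes}}$ onto the image of $A$, which under the residue exact sequence is canonically identified with the cycle space $H_1(\Gamma_\X,\R) \subset C_1(\Gamma_\X,\R)$. The desired equality is then a classical electrical-network identity: the squared length $\|Pe_\alpha\|^2$ of the orthogonal projection of the standard edge basis vector onto the cycle space equals $\ell_\alpha/(\ell_\alpha + r_\alpha)$, a form of the Kirchhoff/Foster theorem that can be verified by direct computation with the weighted graph Laplacian. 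Foster's summation identity $\sum_\alpha \ell_\alpha/(\ell_\alpha + r_\alpha) = h^1(\Gamma_\X)$ further confirms that the total edge mass agrees with $\operatorname{tr}(AR^{-1}A^T) = h^1$. Combining the thick and thin contributions then reproduces exactly the Zhang measure on $\Gamma_\X$, and transferring back via $\tau_t$ yields the desired weak convergence on $\X^\hyb$.
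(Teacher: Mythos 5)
Your proposal follows essentially the same strategy as the paper: pick a basis of sections of the relative dualizing sheaf adapted to the residue exact sequence, compute the block asymptotics of the Gram/period matrix with the $2\pi\log|t|^{-1}$ growth in the residue block, block-invert, and separate the thick-component (Dirac) and thin-annulus (Lebesgue) contributions via a partition of unity and the log map. The one genuine variation is in the final identification of the edge density. The paper first orthonormalizes the residue vectors with respect to the edge-length pairing (Lemma~\ref{lemHermitianPairing}) and then invokes Baker--Faber on the Foster coefficient (Proposition~\ref{propBF11}); you instead keep the residue basis arbitrary and identify $(AR^{-1}A^{T})_{\alpha\alpha}$ with the Zhang density by interpreting $AR^{-1}A^{T}$ as the orthogonal projector onto the cycle space of $\Gamma_\X$. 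These are equivalent formulations of the same electrical-network fact; your packaging is arguably cleaner in that it avoids choosing a special orthonormal basis, while the paper's route localizes the graph-theoretic input to a single black-box citation.

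There is, however, a genuine gap: you assume from the outset that $\X$ is a semistable model (every node locally $xy=t$, every edge of unit length), whereas the theorem is stated for an arbitrary normal crossing model $\X$ of $X$, only assuming that $X$ \emph{admits} a semistable reduction. Such an $\X$ can have components of multiplicity greater than one; for instance, blowing up a node of a semistable model produces an exceptional $\P^1$ of multiplicity $2$. This is not a cosmetic generalization: for $\X'\geq\X$ the map $\X'^{\hyb}\to\X^{\hyb}$ is continuous, so convergence on the \emph{finer} hybrid space implies convergence on the coarser one but not conversely; proving the statement only for the coarsest (semistable, minimal) model therefore does not yield it for its blowups. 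To cover a general nc model you would need to redo the local computations in charts $t=z^{a}w^{b}$, with edge lengths $1/(ab)$ and the normalization $\theta_{i,t}\to\frac{1}{a}\psi_i$, and, crucially, invoke the fact (Remark~\ref{rmkMultiplicities}) that when $X$ has semistable reduction every positive-genus component of $\X_0$ has multiplicity one. Without that last observation the vertex contributions acquire spurious factors of $1/a$ and the limiting Dirac weights come out wrong.
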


We are also able to prove a convergence statement on a hybrid space which has the \emph{metrized curve complex} in the sense of Amini and Baker \cite{AB15} as the central fiber. The metrized curve complex associated to a normal crossing model $\X$ of $X$ is a topological space obtained by replacing each nodal point in $\X_0$ by a line segment. We get $\X_0$ from the associated metrized curve complex by collapsing the line segments. We also get the dual graph $\Gamma_\X$ by collapsing the Riemann surfaces in the metrized curve complex to points. We construct a hybrid space $\X_\CC^\hyb$ which is a partial compactification of $X$ with the central fiber the metrized curve complex associated to $\X$.
\begin{mainthm}
\label{mainThm3}
Assume that $X$ has semistable reduction and let $\X$ be a normal crossing model of $X$. Then, there exists a measure $\mu_\CC$ on the metrized curve complex associated to $\X$ such that $\mu_t$ converges weakly to $\mu_\CC$ as $t \to 0$, when seen as measures on $\X^\hyb_\CC$.
\end{mainthm}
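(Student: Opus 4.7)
The plan is to bootstrap from Theorem~\ref{mainThm2} using the natural continuous surjection $\pi \colon \X_\CC^\hyb \to \X^\hyb$, which contracts each Riemann surface $C_v$ of the metrized curve complex central fiber to the corresponding vertex $v$ of the dual graph $\Gamma_\X$ and is the identity on edges as well as on $X \subset \X_\CC^\hyb$. First I would establish weak-$*$ compactness of $\{\mu_t\}_{t \in \D^*}$ on the compact space $\X_\CC^\hyb$: each $\mu_t$ has total mass $g$, so by Prokhorov any sequence $t_n \to 0$ has a subsequence along which $\mu_{t_n}$ converges weakly to some limit measure. It then suffices to show that every such subsequential limit equals a single measure $\mu_\CC$, whose candidate form is
\begin{equation*}
    \mu_\CC = \sum_{e} w_e \, m_e + \sum_{v} \mu_{C_v},
\end{equation*}
where the first sum is over edges of $\Gamma_\X$ (viewed as line segments in the metrized curve complex) with Lebesgue measure $m_e$ weighted by the Zhang weight $w_e$, and $\mu_{C_v}$ is the Bergman measure on $C_v$.

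Pushing forward by $\pi$ and applying Theorem~\ref{mainThm2}, any subsequential limit of $\mu_t$ on $\X_\CC^\hyb$ must push forward to the Zhang measure on $\Gamma_\X$. Since $\pi$ is the identity on edges and $\pi_\ast \mu_{C_v} = g_v \delta_v$ (the Bergman measure on a genus $g_v$ surface has total mass $g_v$), this already pins down the edge contribution of the limit as $\sum_e w_e m_e$ and fixes the total mass of the limit restricted to $C_v$ to be $g_v$. The remaining task is to identify this restriction with the Bergman measure $\mu_{C_v}$.

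For this I would work locally on a relatively compact open $V \Subset C_v^\reg$ bounded away from the nodes. Smoothness of $\X$ along $V$ (via the implicit function theorem applied to $\X \to \D$) yields a family of holomorphic embeddings $V \hookrightarrow X_t$ for small $|t|$ converging to the inclusion $V \hookrightarrow X_0$. Write the Bergman form on $X_t$ as $\frac{i}{2} \sum_i \phi_{i,t} \wedge \overline{\phi_{i,t}}$ for an orthonormal basis of $H^0(X_t, \Omega_{X_t})$. Using the relative dualizing sheaf $\omega_{\X/\D}$, whose restriction to $C_v$ agrees with $\Omega_{C_v}$ away from the nodes, I would split $H^0(X_t, \Omega_{X_t})$ into the subspace spanned by sections extending across $X_0$ to honest differentials on the normalization (of total dimension $\sum_w g_w$) and an orthogonal complement whose sections have $L^2$-mass asymptotically concentrated on the annular thin parts around the nodes. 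The latter contributes $o(1)$ mass on $V$, while the former converges in $L^2(V)$ to an orthonormal basis of the Bergman pairing on $\bigoplus_w H^0(C_w, \Omega_{C_w})$; combining these gives $\mu_t|_V \to \mu_{C_v}|_V$.

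The main technical obstacle is controlling the Gram--Schmidt orthogonalization uniformly as $t \to 0$. With respect to the degenerating hermitian pairing $\frac{i}{2} \int_{X_t} \phi \wedge \overline{\psi}$, sections of $\omega_{\X/\D}$ with nonzero residue at some node have norms that grow like $\sqrt{|\log|t||}$ due to logarithmic divergence on shrinking annuli; one must verify that after rescaling these remain asymptotically orthogonal to the honest sections and that their mass concentrates entirely on the thin parts. This analysis is presumably already the heart of the proof of Theorem~\ref{mainThm2}, and Theorem~\ref{mainThm3} then follows by combining that input with the local identification on the thick pieces $V \Subset C_v^\reg$ described above.
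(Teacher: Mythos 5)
Your strategy is sound but takes a genuinely different route from the paper's. The paper proves Theorem~\ref{mainThm3} by a direct local analysis: it uses the neighborhood bases of points in $\Delta_\CC(\X)$ from Section~\ref{subsecCurveComplexHybridSpace} and checks convergence separately near interior points of curves (Lemma~\ref{lemGenusMassOnVertices}), near interior points of edges (a variant of Lemma~\ref{lemKey}), and near intersection points (the new and most technical lemma of Section~\ref{secCCConvergence}, where the half-dumbbell retraction $r$ appears). You instead extract a subsequential weak limit $\nu$ by compactness of $\X^\hyb_\CC$ over a closed disk, push forward along the continuous contraction $\pi\colon\X^\hyb_\CC\to\X^\hyb$, and invoke Theorem~\ref{mainThm2} to conclude $\pi_*\nu=\mu_{Zh}$. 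Since $\pi$ is a bijection on the interiors of edges and contracts $C_v$ to $v$, this pins down $\nu$ on all edge interiors and forces $\nu(C_v)=g_v$. You then need only the local identification of $\nu|_V$ with the Bergman measure for $V\Subset C_v^\reg$, which is exactly the content of Lemma~\ref{lemGenusMassOnVertices}; the mass constraint $\nu(C_v)=g_v$ together with the fact that the Bergman measure assigns zero mass to the finite set of intersection points then shows $\nu$ places no extra atoms there, so $\nu=\mu_\CC$ independently of the subsequence. What you buy is a complete bypass of the paper's intersection-point lemma, the hardest local computation in Section~\ref{secCCConvergence}; what you pay is that you must use Theorem~\ref{mainThm2} as a black box, whereas the paper's direct proof makes Theorem~\ref{mainThm2} a corollary of Theorem~\ref{mainThm3} via the same pushforward map. (Note the paper remarks on the reverse implication explicitly in the introduction.) Two small points worth tightening: the Prokhorov step should be justified by observing that $\X^\hyb_\CC|_{r\bar\D}$ is a compact metrizable space, e.g.\ via the continuous injection into $\X\times\X^\hyb$ coming from the two collapsing maps; and your ``$L^2$ splitting'' sketch of the thick-part analysis, while correct in spirit, is precisely the Gram-matrix asymptotics of Lemma~\ref{lemAsymptotics} and Corollary~\ref{corAsymptoticsAInverse} -- citing those directly would close that loop rather than leaving it as a ``presumably.''
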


The measure $\mu_\CC$ restricted to each Riemann surface of positive genus in the metrized curve complex is exactly the Bergman measure on that Riemann surface. The measure $\mu_\CC$ places no mass on any genus zero Riemann surface in the metrized curve complex. The restriction  of $\mu_\CC$ on an edge is exactly the Zhang measure restricted to the edge. This shows us that the Dirac masses that show up in the Zhang measure correspond to collapsed Bergman measures.


Theorem \ref{mainThm2} is closely related to \cite[Remark 16.4]{dJon19}. The main difference between the two results is that  \cite{dJon19} does not involve any Berkovich spaces and the limiting measure lives on the singular curve $\X_0$ while in our case the limiting measure is on the metric graph $\Gamma_\X$. Another difference is that de Jong's result only applies to semistable models of $X$ while we also deal with the case when the central fiber is not necessarily reduced. 
The limiting measure in \cite[Remark 16.4]{dJon19} is the sum of the Bergman measures on the normalization of positive genus irreducible components of $\X_0$ and some Dirac masses on nodal points. The mass at a nodal point is equal to the total mass of the corresponding edge in the Zhang measure. Theorem \ref{mainThm3} serves as a concrete link between the two: the pushforward of $\mu_\CC$ to $\X_0$ gives the limiting measure in \cite[Remark 16.4]{dJon19} while its pushforward to $\Gamma_\X$ gives the Zhang measure. So we recover both Theorem \ref{mainThm2} and \cite[Remark 16.4]{dJon19} from Theorem \ref{mainThm3} by considering the continuous maps $\X^\hyb_\CC \to \X^\hyb$ and $\X^\hyb_\CC \to \X$.

To prove Theorem \ref{mainThm1} using Theorem \ref{mainThm2}, we just need to show that the convergence given by Theorem \ref{mainThm2} for different models are compatible i.e.~if $\X,\X'$ are models of $X$ such that we have a proper map $\X' \to \X$ which restricts to identity on $X$, then the limiting measures seen as measures on $\Gamma_{\X'}$ using $\Gamma_\X \hookrightarrow \Gamma_{\X'}$ are the same. Now, using the fact that $X^\hyb = \varprojlim_{\X} \X^\hyb$, we get Theorem \ref{mainThm1} in the case when $X$ has semistable reduction. Since a semistable reduction always exists after a base change, to prove Theorem \ref{mainThm1} in general, we only need to understand what happens after a base change.  

To prove Theorem \ref{mainThm2} on $\X^\hyb$ for a normal crossing model $\X$ of $X$, we make a careful choice of elements of $H^0(\X,\Omega_{\X/\D})$ that restrict to a basis of $H^0(X_t,\Omega_{X_t})$ for all $t$ and also to good basis of $H^0(\X_{0,\red},\omega_{\X_{0,\red}})$. We also work with $\X_{0,\red}$ instead of $\X_0$ because the dualizing sheaf, $\omega_{\X_{0,\red}}$, is better behaved. We express the Bergman measure in terms of this basis and compute some asymptotics. Our analysis strongly uses the analogy between one-forms on Riemann surfaces and on metric graphs.

To prove Theorem \ref{mainThm3}, we first construct the metrized curve complex hybrid space $\X^\hyb_\CC$ for a normal crossing model $\X$ of $X$. We then analyze the convergence in a small enough neighborhood of each point in the central fiber. For non-nodal points that lie on an irreducible component of $\X_0$ or points in the interior of a line segment, this computation is a minor modification of the computations done to prove Theorem \ref{mainThm2}. So, we only need to study the convergence in a neighborhood of a point that is the intersection of an irreducible component of $\X_0$ and a line segment. The proof of this part uses the same kind of analysis, just a more careful one.

A major difference between the results of \cite{BJ17} and this paper is that  the limiting measure in \cite{BJ17} is either always Lebesgue or always atomic, but never a sum of both. For $g=1$, Theorem \ref{mainThm1} recovers the one-dimensional case of the  convergence theorem in \cite{BJ17}. See also \cite[Corollary 4.8]{CLT10} for a related statement.




We would also like to point out that some of the asymptotics that we use to prove Theorem \ref{mainThm2} are similar to the ones used by de Jong to prove \cite[Remark 16.4]{dJon19}. For example, compare Lemma \ref{lemAsymptotics} and \cite[Equation (16.7)]{dJon19}. De Jong's asymptotics are more versatile as they involve families $\X \to \D^m$ and are proved using the theory of variation of mixed Hodge structures. We don't use any variation of mixed Hodge structures and prove these asymptotics for $m=1$ by explicit computations.

\subsection{An example}
\begin{figure}
\centering
\begin{subfigure}{0.45\linewidth}
\centering
\includegraphics[scale=0.44]{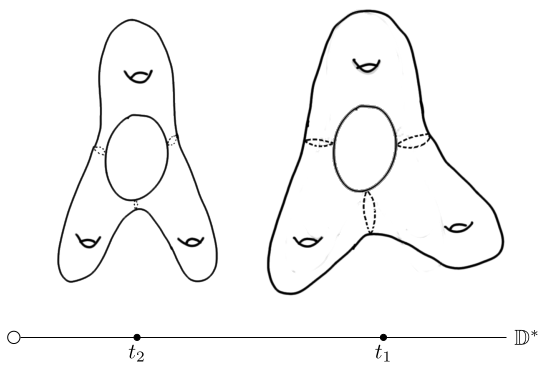}
\caption{A family of genus 4 curves obtained by pinching the dotted lines.}%
\label{subfigGenus4Family}
\end{subfigure}%
\hspace{0.05\linewidth}
\begin{subfigure}{0.45\linewidth}
\centering
\includegraphics[scale=0.44]{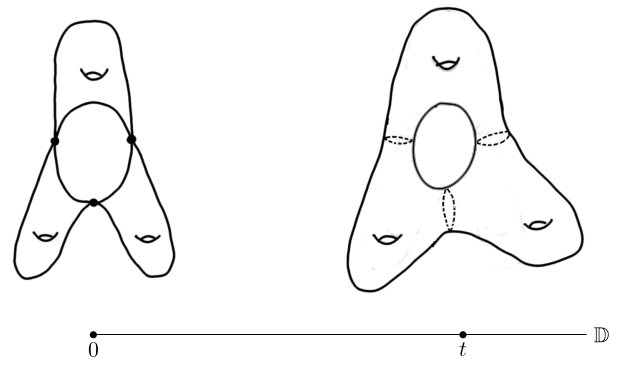}
\caption{The minimal normal crossing model of the family}
\label{subfigNCModel}
\end{subfigure}
\caption{A family of genus 4 curves and its minimal normal crossing model.}
\label{figGeus4FamilyAndNCModel}
\end{figure}

Let $X \to \D^*$ be a family of compact genus 4 Riemann surfaces given by pinching the dotted simple closed curves in Figure \ref{subfigGenus4Family}.
Then, the central fiber of the minimal normal crossing model, $\X$, has three irreducible components each of genus one intersecting at 3 nodal points (see Figure \ref{subfigNCModel}). The associated hybrid space is shown in Figure  \ref{figBerkovichHybridSpace}.

\begin{figure}
  \centering
  \includegraphics[scale=0.4]{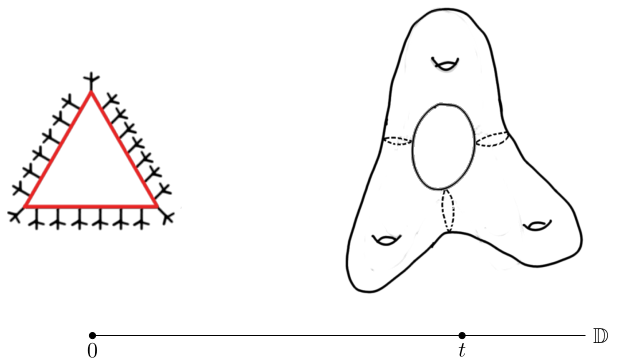}
  \caption{The hybrid space associated to the family in Figure \ref{subfigGenus4Family}. The support of the Zhang measure is shown in red.}
  \label{figBerkovichHybridSpace}
\end{figure}

In this case, the dual graph, $\Gamma_\X$, is a triangle with all three vertices of genus one. The Zhang measure is a sum of a Lebesgue measure on each of edge of mass $\frac{1}{3}$ and a Dirac mass on each vertex of mass $1$. 
The central fiber of the hybrid space has a subspace homeomorphic to $\Gamma_\X$.

\begin{figure}
   \centering
  \includegraphics[scale=0.4]{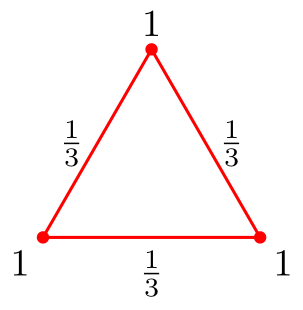}
 \caption{The Zhang measure on the Berkovich space associated to the family in Figure \ref{subfigGenus4Family}}
 \label{figZhangAndPluriZhang}
 \end{figure}

The curve complex hybrid space associated to the minimal normal crossing model is shown in Figure \ref{figCCHybridSpace}. The measure $\mu_\CC$ on the metrized curve complex in the sum of the Bergman (Haar) measures on each of the genus 1 curves and Lebesgue measure of mass $\frac{1}{3}$ on each of the edges. 

\begin{figure}
  \centering
  \includegraphics[scale=0.4]{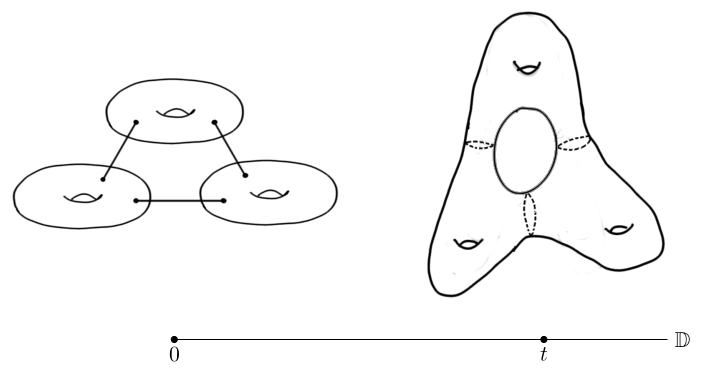}
  \caption{The curve complex hybrid space associated to the family in Figure \ref{subfigNCModel}.}
  \label{figCCHybridSpace}
\end{figure}

\subsection*{Further questions} We will address the convergence of Bergman measures associated to higher tensor powers of the canonical line bundle in future work \cite{Shi20}.

We can define the Bergman measure on higher dimensional complex manifolds and ask how these Bergman measures converge on the hybrid space. We could also ask if there is a $p$-adic analogue of such a convergence in the sense of \cite{JN19}.

\subsection*{Structure of the paper}
In Section \ref{secPreliminaries}, we discuss some preliminaries. In Section \ref{secHybridSpace}, we recall the construction of the hybrid space. In Section \ref{secCanonicalSheaf}, we recall some properties of the dualizing sheaf of curves with at worst simple nodal singularities. In Section \ref{secAsymptotics}, we compute some asymptotics related to the Bergman measure. In Section \ref{secConvergenceTheorem}, we prove Theorems \ref{mainThm2} and \ref{mainThm1}. The key technical result in this section is Lemma \ref{lemKey}. In Section \ref{secCCConvergence}, we work out the convergence on the metrized curve complex hybrid space, proving Theorem \ref{mainThm3}.

\subsection*{Acknowledgments}
I thank Mattias Jonsson for suggesting this problem to me, and also for his support and guidance. I also thank Omid Amini, Sébastien Boucksom, Robin de Jong, Holly Krieger, Yuji Odaka, Harry Richman and Farbod Shokrieh for helpful comments on a preliminary draft. 
This work was supported by the NSF grants DMS-1600011 and DMS-1900025.

\section{Preliminaries}
\label{secPreliminaries}
\subsection{Curves and models}
Throughout this paper, a \emph{family of curves} $X$ over $\D^*$ of genus $g \geq 1$ refers to a complex manifold $X$ of dimension 2 such that we have a smooth projective holomorphic map $X \to \D^*$ with fibers being connected smooth complex projective curves of genus $g$. We also assume that the family is meromorphic at $0$ i.e.~there exists a projective flat family $\X \to \D$ extending $X \to \D^*$ with $\X$ normal and having a non-empty fiber over $0$. 

A \emph{model}  $\X$ of $X$ is a flat projective holomorphic family $\X \to \D$ such that $\X|_{\D^*}$ is biholomorphic to $X$ as spaces over $\D^*$. 
We say that $\X$ is a \emph{regular model} of $X$ if $\X$ is regular.
The fiber over $0$, $\X_0$, is called the special fiber. Let $\X_{0,\red}$ denote the reduced induced structure on $\X_{0}$. 

We say that $\X$ is a \emph{normal crossing model} (abbreviated as \emph{nc model}) of $X$ if $\X$ is regular and $\X_{0,\red}$ is a normal crossing divisor.  

\subsection{Semistable reduction and minimal nc models}
\label{subsecSemistableReductionAndMinimalModel}
We refer the reader to \cite{Rom13} for a detailed introduction to models over a DVR. We summarize some of the results that we will use.

For any model $\X$ of $X$, we always have that $\X_0$ is connected \cite[Corollary 8.3.6]{Liu02}.

A family of curves $X$ is said to have \emph{semistable reduction} if there exists an nc model $\X$ of $X$ with reduced special fiber i.e.~$\X_0 = \X_{0,\red}$ and  such an $\X$ is called a \emph{semistable model} of $X$.

A family of curves $X$ of genus $g \geq 1$ always has semistable reduction  after performing a finite base change $\D^* \to \D^*$ given by $u \mapsto t^n$. This follows from \cite[Corollary 2.7]{DM69} in the case when $g \geq 2$. See
 \cite[\href{https://stacks.math.columbia.edu/tag/0CDN}{Tag 0CDN}]{stacks-project} for a general statement.

A family of curves $X$ of genus $g \geq 1$ always has a \emph{minimal nc model} i.e.~there exists an nc model $\X_{\min}$ of $X$ such that for any nc model $\X$ of $X$, there is a proper morphism $\X \to \X_{\min}$. 
Such a model is unique up to a unique isomorphism. See \cite[Theorem 2.5.1]{Rom13} or \cite[\href{https://stacks.math.columbia.edu/tag/0C6B}{Tag 0C6B}]{stacks-project} for details. 

When $X$ has semistable reduction, the minimal nc model is also semistable \cite[\href{https://stacks.math.columbia.edu/tag/0CDG}{Tag 0CDG}]{stacks-project}. In addition, the special fiber of the minimal nc model has no non-singular rational component that meets the rest of the component in only one point.

\subsection{Blowups and getting new models from old ones}
Given two models $\X$ and $\X'$ of $X$, we say that $\X'$ \emph{dominates} $\X$ and write $\X' \geq \X $ if we have a proper holomorphic map  $\X' \to \X$ such that its restriction to $\X'|_{\D^*}$ commutes with the isomorphism to $X$.

If $\X,\X'$ are two nc models of $X$ such that $\X' \geq \X$, then $\X'$ is obtained from $\X$ by a sequence of blowups at closed points in the special fiber \cite[Theorem 1.15]{Lic68}. 

If $\X$ is an nc model of $X$, we can get a new nc model $\X'$ dominating $\X$ by blowing up $\X$ at a closed point in $\X_0$. 
Given two models $\X$ and $\X'$ of $X$, there always exists a model $\X''$ such that $\X'' \geq \X$, $\X'' \geq \X'$, and $\X''$ is obtained from both $\X$ and $\X'$ by a sequence of blowups in the special fiber \cite[Proposition 4.2]{Lic68}.

\subsection{Dual graph associated to a model}
\label{subsecDualGraph}
Let $\X \to \D$ be an nc model of $X$. The \emph{dual graph} $\Gamma_\X$ associated to $\X$ a connected metric graph . The vertices of $\Gamma_\X$ correspond to the irreducible components of $\X_0$. If $P$ is a node in $\X_0$ that lies in the intersection of the components $E_0$ and $E_1$, then we add an edge $e_p$ between the vertices $v_{E_0}$ and $v_{E_1}$. Let $V(\Gamma_\X)$ and $E(\Gamma_\X)$ denote the vertex and edge set of the dual graph respectively. Note that $\Gamma_\X$ might have loop edges and multiple edges between a pair of vertices.

We define a length on each edge i.e.~we have a function $l : E(\Gamma_\X) \to \Q_{\geq 0}$ defined as follows. Let $z,w$ be the (analytic) local equations defining the irreducible components containing a node $P$. Then, locally near $P$, the map $\X \to \D$ is given by $(z,w) \mapsto z^aw^b$, where $a$ and $b$ are the respective multiplicities of the irreducible components. We define the length of $e_P$  to be $\frac{1}{ab}$.

It is also useful to keep track of the genus of the irreducible components. So our metric graph also comes with the data of a genus function $g : V(\Gamma_\X) \to \N$ given by taking the value of the genus of the normalization of an irreducible component at every vertex. We also define the genus of $\Gamma_\X$ to be its first Betti number i.e.~
$$g(\Gamma_\X) = |V(\Gamma_\X)| - |E(\Gamma_\X)| + 1.$$ 

Note that if $\X$ is a semistable model of $X$, all the edges in the dual graph $\Gamma_\X$ would have length $1$. 
For more details about the dual metric graphs, refer to \cite{BF11}, \cite{BPR13} and \cite{BPR16}.

Let $\X'$ be obtained by blowing up $\X$ at a closed point in $\X_0$. Then, $\Gamma_\X$ and $\Gamma_{\X'}$ are related as follows.
\begin{itemize}
\item If $\X'$ is obtained by blowing up a smooth point on an irreducible component $E_0 \subset \X_0$ of multiplicity $a$, then $\Gamma_{\X'}$ is obtained from $\Gamma_{\X}$ by adding a new vertex $v_{E}$ corresponding to the exceptional divisor of the blowup and adding an edge of length $\frac{1}{a^2}$ between $v_{E}$ and $v_{E_0}$. The genus function is extended to one on $\Gamma_{\X'}$ by defining it to be $0$ on $v_{E}$. 
\item If $\X'$ is obtained by blowing up a node $P = E_1 \cap E_2$ for (possibly same)  irreducible components $E_1,E_2 \subset \X_0$, then $\Gamma_{\X'}$ is obtained from $\Gamma_{\X} $ by subdividing the edge $e_{P}$ into edges of lengths $\frac{1}{a(a+b)}$ and  $\frac{1}{(a+b)b}$ by adding a vertex $v_E$ corresponding to the exceptional divisor. This makes sense as
  $$ \frac{1}{ab} = \frac{1}{a(a+b)} + \frac{1}{(a+b)b} .$$
The genus function is extended to $\Gamma_{\X'}$ by defining it to be 0 on $v_{E}$. 
\end{itemize}
In both the cases, we see that we have an inclusion $\Gamma_\X \hookrightarrow \Gamma_{\X'}$ as well as a retraction $\Gamma_{\X'} \to \Gamma_{\X}$, and thus $\Gamma_{\X'}$ is a deformation retract of $\Gamma_{\X}$. They both also have the `same' genus function.

More generally, given two nc models $\X$ and $\X'$, they can both be dominated by a common model $\X''$ obtained by a sequence of blowups from both $\X$ and $\X'$. Thus, we see that $g(\Gamma_{\X}) = g(\Gamma_{\X'})$ and $\sum_{v \in V(\Gamma_{\X})} g(v) = \sum_{v \in V(\Gamma_{\X'})} g(v)$. Let $g' = g(\Gamma_\X)$.

The following remark is a consequence of the invariance of the genus functions under blowups.
\begin{rmk}
\label{rmkMultiplicities}
Suppose that $X$ has a semistable model and let $\X$ be an nc model of $X$. Then, any irreducible component $E \subset \X_0$ whose normalization has positive genus, has multiplicity $1$.
\end{rmk}

\begin{rmk}[The two choices of the length function]
  There are two possible ways of assigning lengths that we can assign to a node $P$ given by the intersection of two irreducible components of $\X_0$ with multiplicities $a$ and $b$ respectively. One way is to define the lengths as above, by setting $$l_1(e_P) = \frac{1}{ab} .$$
  Yet another way is to define the length by
  $$ l_2(e_P) = \frac{1}{\lcm(a,b)} .$$
  Both these lengths are compatible with respect to blowups. This follows from the fact that
  $$ \frac{1}{\lcm(a,b)} = \frac{1}{\lcm(a,a+b)} + \frac{1}{\lcm(a+b,b)} .$$
  See \cite{BN16} 
  for comparisons between the two metrics. The advantage of using the first length function is that it makes our computations easier and the advantage of using the second one is that it is well-behaved with respect to ground field extensions. 

  In our case, it turns out that we could have chosen either one of the above  metrics and it would not matter. The reason for this is that if we assume that $X$ has a semistable model, the two notions of length can only differ on bridge edges of the dual graphs associated to any model. Since our aim is to compute the Zhang measure on the dual graph using the length function, it is enough to realize that Zhang measure remains invariant under change of length of any bridge edge.  
\end{rmk}

\subsection{The Zhang measure on the dual graph}
\label{subsecZhangMeasure}
Let $\Gamma$ be the metric graph along with a genus function $g : V(\Gamma) \to \N$. The \emph{Zhang measure} on $\Gamma$ is a measure and is given as follows.
$$ \mu_{Zh} = \sum_{v \in V(\Gamma)} g(v) \delta_{v} + \sum_{e \in E(\Gamma)} \frac{dx|_{e}}{l_e + r_e}$$

Here $\delta_v$ is a Dirac measure at $v$, $l_e$ is the length of the edge $e$, $r_e$ is the resistance between the endpoints of the edge $e$ in the graph obtained by removing the edge $e$ and $dx|_{e}$ is the Lebesgue measure on the edge $e$ normalized such that $\int_{e}dx|_{e} = l_e$.  When $e$ is a bridge edge i.e.~removing $e$ from $\Gamma$ disconnects $\Gamma$, then $r_e := \infty$ and $\frac{1}{l_e + r_e} = 0$. Thus, the Zhang measure places no mass on bridge edges. For more details, see \cite{Zha93}. Note that our definition differs from Zhang's original definition by a factor of $g$. This is done so that so that the total mass of Zhang measure is now equal to $g = \sum_{v \in V(\Gamma)} g(v) + g(\Gamma)$. For an interpretation of $\frac{1}{l_e + r_e}$ in terms of spanning trees and electrical networks, refer to \cite[Section 6]{BF11}. 

\begin{rmk}
  Note that the Zhang measure is invariant under the following operations.
  \begin{itemize}
  \item If we subdivide an edge of length $l$ into two edges of lengths $l_1,l-l_1$, the Zhang measure does not change.
  \item If we introduce a new vertex $v'$ and add a new edge $e$ between $v'$ and an existing vertex $v$, the Zhang measure on the new graph is the same as the one on the old graph as the edge $e$ would be a bridge and would not alter any of the resistances in the old graph.    
  \item If we multiply all the lengths by a fixed factor $N \in \R_+$, the Zhang measure does not change. This is because the resistance is linear as a function of edge lengths and thus the quantity $\frac{l_{e}}{l_e + r_e}$ remains unchanged.  
\end{itemize}

The first two operations correspond to altering an nc model by blowups and the third operation corresponds to ground field extension.
  
\end{rmk}

\subsection{Bergman measure on a complex curve}
Let $Y$ be a complex curve of genus $g \geq 1$. Then there exists a natural Hermitian metric on $H^0(Y,\Omega_Y)$ given by
\begin{equation}
\label{eqnHermitianPairing}
(\vartheta,\vartheta') \mapsto \frac{i}{2} \int_Y \vartheta \wedge \overline{\vartheta'}.
\end{equation}
Let $\vartheta_1,\dots,\vartheta_g$ be an orthonormal basis of $H^0(Y,\Omega_Y)$ with respect to this pairing. Then, we get a positive $(1,1)$-form $\frac{i}{2}\sum_{i} \vartheta_i \wedge \overline{\vartheta_i}$ on $Y$. It is easy to verify that this $(1,1)$-form does not depend on the choice of the orthonormal basis.
This $(1,1)$-form gives rise to a measure on $Y$ which is known as the \emph{Bergman measure}. Note that the total mass of the Bergman measure is $g$. For more details regarding the Bergman measure, see \cite{Ber10} and \cite[Section 3.3]{BSW19}.

\subsection{Associated Berkovich space}
The \emph{Berkovich space}, $Y^\an$, associated to a proper variety $Y$ defined over a non-Archimedean field $K$ is a compact Hausdorff topological space. As a set, $Y^\an$ consists of pairs $(x,v)$ where  $x \in Y$  is a (not necessarily closed) point and $v$ is a valuation on $K(x)$ extending the given valuation on $K$.

In our setting, $K = \Ct$ with the $t$-adic valuation. Let $X_\Ct$ be the projective variety cut out by the defining equations of $X$, where we view the coefficients of the defining polynomial as elements of $\Ct$ by looking at the power series expansion around $0$.

The collection of all nc models of $X$ forms a directed system. Given a proper morphism $\X' \to \X$, we get a retraction map $\Gamma_{\X'} \to \Gamma_\X$. For example, if $\X'$ is obtained by blowing up $\X$ at a node in $\X_0$, then this map is an isometry and if $\X'$ is obtained by blowing up $\X$ at a smooth point $P \subset \X_0$, then this map is obtained by collapsing the vertex and edge associated to the exceptional divisor and the new node respectively to the vertex on $\Gamma_\X$ associated to the irreducible component containing $P$. More generally, see \cite{MN15} 
for a description of this map. 

Then, we have an homeomorphism \cite[Theorem 10]{KS06} \cite[Corollary 3.2]{BFJ16} 
$$X_\Ct^\an \simeq \varprojlim_{\text{nc models }\X} \Gamma_\X. $$ 

A reader unfamiliar with Berkovich spaces may take the above as the definition of the associated Berkovich space, as we will mostly be using this description.

\section{The hybrid space}
\label{secHybridSpace}
Given an nc model $\X$ of $X$, we can construct a \emph{hybrid space} $\X^\hyb$ given set theoretically as $\X^\hyb = X \sqcup \Gamma_{\X}$. 
We can topologize the hybrid space as follows. We refer the reader to \cite{BJ17} and \cite{Shi19} for a more detailed discussion regarding the construction of the hybrid space.

Consider a chart given by an open subset $U \subset \X$ such that $U \cap \X_0 = U \cap E$, where $E$ is an irreducible component of $\X_0$ of multiplicity $a$ and there exist coordinates $(z,w)$ on $U$ with $|z|,|w| < 1$  such that the projection to $\D$ is given by $(z,w) \mapsto z^a$. Following the terminology of \cite{BJ17}, we call such a coordinate chart as being \emph{adapted} to $E$. In this case, we define $\Log_U : U \setminus E \to v_{E}$ to be the constant function, where $v_E \in \Gamma_{\X}$ is the vertex corresponding to $E$. 

Now, let $P = E_1 \cap E_2$ be a node where $E_1$ and $E_2$ are either two distinct irreducible components of $\X_0$, or correspond to two different local analytic branches of the same irreducible component. Let the multiplicities of $E_1,E_2$ in $\X_0$ be $a,b$ respectively. Now consider a coordinate chart given by an open set $U \subset \X$ such that $U \cap \X_0$ = $U \cap (E_1 \cup E_2)$ and there exist coordinates $(z,w)$ on $E$ with $|z|,|w| < 1$, $U \cap E_1 = \{ z = 0 \}$, $U \cap E_2 = \{ w = 0 \}$ and the projection to the disk is given by $(z,w) \mapsto t = z^aw^b$. Such a coordinate chart is said to be \emph{adapted} to the node $P = E_1 \cap E_2$. In this case, we define $\Log_U : U \setminus \X_0 \to e_{P}$ by $(z,w) \mapsto \frac{\log|z|}{b\log|t|}$, where we identify $e_P$ with $[0,\frac{1}{ab}]$ with $v_{E_2}$ corresponding to $0$ and $v_{E_1}$ corresponding to $\frac{1}{ab}$. 

A coordinate chart adapted to either an irreducible component of $\X_0$, or to a node in $\X_0$ is called such a coordinate chart as an adapted coordinate chart.

Let $V = \bigcup_i U_i$ be a finite cover of an open neighborhood of $\X_0$ by adapted coordinate charts  and let $\chi_i$ be a partition of unity with respect to the cover $U_i$. Then the function $\Log_V : V \setminus \X_0 \to \Gamma_\X$ defined by $\Log_V = \sum_i \chi_i \Log_{U_i}$ is well-defined (note that addition in $\Gamma_\X$ is not well-defined, but it makes sense on an edge using the identification $e_P \simeq [0,l_{e_P}]$). Such a function is called a global log function. The following remark is very useful and is proved using \cite[Theorem 5.7]{Cle77}. 

\begin{rmk}[Proposition 2.1, \cite{BJ17}]
\label{rmkLogsAreSame}
  If $V$ and $W$ are open neighborhoods of $\X_0$ with global log functions $\Log_V$ and $\Log_W$, then as $t \to 0$
  $$ \Log_V - \Log_W = O\left( \frac{1}{\log|t|^{-1}} \right)  $$
  uniformly on compact sets of $V \cap W$ . 
\end{rmk}

We define the topology on $\X^\hyb$  to be the coarsest topology satisfying
\begin{itemize}
\item The map $X \to \X^\hyb$ is an open immersion. 
\item The map $\X^\hyb \to \D$ given by extending $\pi : X \to \D^*$ and sending $\Gamma_\X$ to the origin is continuous.
\item Given a global log function $\Log_V$, the map $V \cup \Gamma_\X \to \Gamma_\X$ given by $\Log_V$ on $V$ and identity on $\Gamma_\X$ is continuous. 
\end{itemize}

It follows from Remark \ref{rmkLogsAreSame} that the topology induced on $\X^\hyb$ does not depend on the choice of the global log function.

Define $X^\hyb$ to be $\varprojlim_\X \X^\hyb$, where $\X$ runs over all normal crossing models. Since we have that $X_\Ct^\an = \varprojlim \Gamma_\X$, we get that the central fiber of $X^\hyb$ is homeomorphic to $X_\Ct^{\an}$. In fact, it is possible to see the space $X^\hyb$ as the Berkovich analytification of $X$ seen as a scheme over a certain Banach ring \cite{Ber09}, \cite[Appendix]{BJ17}. See also \cite{Poi13}.

\section{The canonical sheaf on $\X_{0,\red}$}
\label{secCanonicalSheaf}
If $Y$ is a a smooth projective complex curve, then we define its \emph{dualizing sheaf} $\omega_Y$ as the sheaf of holomorphic de-Rham differentials $\Omega_Y$ i.e.~$\omega_Y = \Omega_Y$. This sheaf satisfies Serre duality i.e.~for any line bundle $\L$ and for $i=0,1$
$$ H^i(Y,\L) \simeq H^{1-i}(Y,\omega_{Y} \otimes_{\O_Y} \L^{\vee})^\vee.$$

In certain more general situations, it is possible to define a sheaf that satisfies similar duality properties. For example, if $Y$ is a Cohen-Macaulay variety, one can define a dualizing sheaf $\omega_{Y}$. \cite[Section III.7]{Har77}

Let $\X$ be an nc model of $X$. A simple computation shows that $\X_{0,\red}$ is a Cohen-Macaulay variety and thus it is possible to define $\omega_{\X_{0,\red}}$. The sheaf $\omega_{\X_0,\red}$ is in fact a line bundle. We give a more explicit description of it later in this section. 

Let us first calculate $\dim_{\C}H^0(\X,\omega_{\X_{0,\red}})$. Since $\omega_{\X_{0,\red}}$ is the dualizing sheaf of $\X_{0,\red}$, by applying Serre duality we get that $$H^0(\X,\omega_{\X_{0,\red}}) \simeq H^1(\X_{0,\red},\O_{\X_{0,\red}})^\vee.$$
Let $\tilde{\X_{0,\red}}$ denote the normalization of $\X_{0,\red}$ and let $p : \tilde{\X_{0,\red}} \to \X_{0,\red}$ denote the normalization map. Then, $\tilde{\X_{0,\red}}$ is a possibly disconnected union of curves. By looking at the long exact sequence induced in cohomology by $$ 0 \to \O_{\X_{0,\red}} \to p_*(\O_{\tilde{\X_{0,\red}}}) \to \sum_{P \in \X_{0,\red} \text{ node}} \C(P) \to 0,$$ it follows that $$\dim_\C H^1(\X_{0,\red},\O_{\X_{0,\red}}) = g(\Gamma_{\X}) + \sum_{v \in V(G)}g(v).$$

If $\X$ is a semistable model of $X$, then $\X_0 = \X_{0,\red}$ and the invariance of the arithmetic genus in flat families guarantees that $\dim_\C H^1(\X_0,\O_{\X_0}) = g$. Thus, in this case, we see that
$g = g(\Gamma_\X) + \sum_{v \in V(\Gamma_\X)} g(v)$.

More generally, the same holds true for any model $\X$ as long as we assume that $X$ has a semistable model. This follows from the fact that $ g(\Gamma_{\X}) + \sum_{v \in V(\Gamma_{\X})}g(v)$ does not depend on the choice of the nc model. (See Section \ref{subsecDualGraph}). In this case, it also follows that $\dim_\C H^0(\X_{0,\red},\omega_{\X_{0,\red}}) = g$. 

\subsection{An explicit description of $\omega_{\X_{0,\red}}$}
\label{subsecOmegaX0Red}
It is possible to give an explicit description of the elements of $H^0(\X_{0,\red},\omega_{\X_{0,\red}})$: they correspond to  meromorphic $1$-forms $\psi$ on $\tilde{\X_{0,\red}}$, the normalization of $\X_{0,\red}$, with at worst simple poles at the points that lie above the nodes in $\X_0$ such that if $P'$ and $P''$ lie above the node $P$, then the residues of $\psi$ at $P'$ and $P''$ add up to 0 \cite[Section I]{DM69}.

Let $E_1,\dots,E_m$ denote the irreducible components of $\X_{0,\red}$.  Then, note that $\tilde{\X_{0,\red}} = \bigsqcup_i \tilde{E_i}$, where $\tilde{E_i}$ is the normalization of $E_i$. When $E_i$ does not have a self-node, then $E_i = \tilde{E_i}$.

Let $P^{(i)}_1,\dots,P^{(i)}_{r_i}$ denote the points in $\tilde{E_i}$ that lie over nodal points in $\X_0$. The above description gives rise to the following short exact sequence of sheaves on $\X_{0,\red}$: 
$$ 0 \to \omega_{\X_{0,\red}} \to \bigoplus_i \omega_{\tilde{E}_i}(P^{(i)}_1+\dots+P^{(i)}_{r_i})  \to \bigoplus_{P \in \X_0 \text{ node }} \C(P) \to 0,$$

where the first map is given by the restrictions $\psi \mapsto (\psi|_{\tilde{E}_1},\dots,\psi|_{\tilde{E}_m})$ and the second map is given by taking the sum of residues. 

We also have a natural inclusion $\omega_{\tilde{\X_{0,\red}}} = \bigoplus_{i} \omega_{\tilde{E}_i} \hookrightarrow \omega_{\X_{0,\red}} $ as the sections of $\omega_{\tilde{\X_{0,\red}}}$ have zero residue at all points. Since $H^0(\tilde{\X_{0,\red}},\omega_{\tilde{\X_{0,\red}}})$, the vector space of holomorphic 1-forms on $\tilde{\X_{0,\red}}$, has dimension $\sum_{v \in V(\Gamma)}g(v)$, it follows that the subspace of $H^0(\X_{0,\red}, \omega_{\X_{0,\red}})$ spanned by $1$-forms that have no poles has dimension $\sum_{v \in V(\Gamma)}g(v) = g - g(\Gamma_\X)$. 

\subsection{One-forms on metric graphs}
We refer the reader to \cite[Section 2.1]{BF11} for a detailed introduction to one-forms on metric graphs. 
Let $\Gamma$ be a connected metric graph of genus $g'$. Assume that $\Gamma$ is oriented i.e.~a choice of an orientation for each edge of $\Gamma$. Then 
we define the space of \emph{one-forms} on $\Gamma$ as:
$$ \Omega(\Gamma) = \left\{ \omega : E(\Gamma) \to \C \  \Big| \sum_{e | e^+ = v} \omega(e) = \sum_{e | e^- = v} \omega(e) \text{ for all }  v \in V(\Gamma) \right\} .$$

It is easy to see that $\dim_\C\Omega(\Gamma) = g'$. There exists a positive definite Hermitian pairing on $\Omega(\Gamma)$ given by
\begin{equation}
 \label{eqnMetricGraphPairing}
\langle \omega, \omega' \rangle = \sum_{e}\omega(e)\overline{\omega'(e)}l_e.
\end{equation} 

This Hermitian pairing should be thought of as the analogue of \eqref{eqnHermitianPairing} for metric graphs.

\begin{prop}[Theorem 5.10, Theorem 6.4 in \cite{BF11}]
\label{propBF11}
  Let $\omega_1,\dots,\omega_{g'}$ be an orthonormal basis of $\Omega(\Gamma)$ with respect to the Hermitian pairing \eqref{eqnMetricGraphPairing}. Let $r_e$ denote the resistance between $e^-$ and $e^+$ in the graph obtained by removing the interior of the edge $e$ from $\Gamma$.  Then, $$\sum_{i = 1}^{g'}\left|\omega_i(e) \right|^2 = \frac{1}{l_e + r_e}.$$
\end{prop}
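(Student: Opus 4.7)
My plan is to recognize $\sum_i |\omega_i(e)|^2$ as the value at $e$ of a reproducing kernel in $\Omega(\Gamma)$, and then to compute that value by interpreting the kernel as a current distribution in an explicit electrical circuit on $\Gamma$.

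By the Riesz representation theorem applied to the linear functional $\omega \mapsto \omega(e)$ on the finite-dimensional Hilbert space $\Omega(\Gamma)$, there is a unique $\eta_e \in \Omega(\Gamma)$ with $\omega(e) = \langle \omega, \eta_e \rangle$ for every $\omega \in \Omega(\Gamma)$. Expanding $\eta_e$ in the orthonormal basis and using $\langle \omega_i, \eta_e\rangle = \omega_i(e)$ gives $\eta_e = \sum_i \overline{\omega_i(e)}\,\omega_i$, and evaluating at $e$ yields
\[
\sum_{i=1}^{g'} |\omega_i(e)|^2 \;=\; \eta_e(e).
\]
So the problem reduces to computing $\eta_e(e)$.

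To identify $\eta_e$ I would work in the ambient space $\C^{E(\Gamma)}$ equipped with the \emph{unweighted} inner product $\langle c, c'\rangle_0 := \sum_{e'} c(e')\overline{c'(e')}$. Inside this space, $\Omega(\Gamma)$ (the flows) is orthogonal to the space of potential differences $\mathrm{Cob} := \{e' \mapsto f(e'^+) - f(e'^-) : f \in \C^{V(\Gamma)}\}$ by the telescoping identity
\[
\sum_{e'} \omega(e')\bigl(f(e'^+) - f(e'^-)\bigr) \;=\; \sum_{v} f(v)\Bigl(\sum_{e'|e'^+=v}\omega(e') - \sum_{e'|e'^-=v}\omega(e')\Bigr),
\]
and a dimension count ($g' + (|V|-1) = |E|$) shows that $\Omega(\Gamma)$ and $\mathrm{Cob}$ are in fact orthogonal complements. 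Rewriting the defining identity of $\eta_e$ as $\sum_{e'} \omega(e')\overline{\bigl(l_{e'}\eta_e(e') - \delta_{e,e'}\bigr)} = 0$ for all $\omega \in \Omega(\Gamma)$ then forces the vector $(l_{e'}\eta_e(e') - \delta_{e,e'})_{e'}$ to lie in $\mathrm{Cob}$: there is $f:V(\Gamma) \to \C$ with
\[
l_{e'}\,\eta_e(e') \;=\; \delta_{e,e'} + f(e'^+) - f(e'^-) \qquad \text{for every edge } e'.
\]

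The final step is to read this as Kirchhoff's laws on the network in which each edge $e'$ is a resistor of resistance $l_{e'}$: membership $\eta_e \in \Omega(\Gamma)$ is the current law at every vertex, while the displayed equation is Ohm's law on each edge $e' \neq e$ together with a single electromotive source of unit voltage inserted in series on edge $e$. Elementary circuit analysis then determines the current through the source: it sees edge $e$ (resistance $l_e$) in series with the effective resistance $r_e$ between $e^-$ and $e^+$ through the rest of the network, giving $\eta_e(e) = 1/(l_e + r_e)$ as desired. The bridge case is automatic: if $e$ disconnects $\Gamma$, flow conservation forces $\omega(e) = 0$ for every $\omega \in \Omega(\Gamma)$, and both sides of the claim vanish under the convention $r_e = \infty$. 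The main subtlety to watch is the orthogonal decomposition step, where two distinct inner products on $\C^{E(\Gamma)}$ are in play (the weighted one that makes $\Omega(\Gamma)$ a Hilbert space and the unweighted one used for the flow/coboundary decomposition); once those are kept straight, the electrical interpretation makes the evaluation of $\eta_e(e)$ essentially automatic.
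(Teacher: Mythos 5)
Your proof is correct, but it takes a genuinely different route from the paper. The paper's proof is essentially a translation into the notation of Baker--Faber followed by a double citation: it identifies $\sum_i |\omega_i(e)|^2$ with the squared $L^2$-norm of the edge-averaging functional $\frac{1}{l_e}\int_e$, then invokes \cite[Theorem 5.10]{BF11} to express this as the Foster coefficient $F(e)/l_e$, and finally \cite[Theorem 6.4]{BF11} to rewrite $F(e)/l_e = 1/(l_e + r_e)$. You instead give a self-contained argument: you package $\sum_i |\omega_i(e)|^2$ as the evaluation $\eta_e(e)$ of the Riesz representative of the edge-evaluation functional, pass to the unweighted orthogonal decomposition $\C^{E(\Gamma)} = \Omega(\Gamma) \oplus \mathrm{Cob}$ to produce a potential $f$ satisfying $l_{e'}\eta_e(e') = \delta_{e,e'} + f(e'^+) - f(e'^-)$, and then read Kirchhoff's and Ohm's laws (with a unit EMF inserted on edge $e$) off this equation to conclude $\eta_e(e) = 1/(l_e + r_e)$. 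This is essentially the internal content of the Baker--Faber results, so nothing is lost, and your version is more transparent for a reader who does not want to chase the citation; the paper's version is shorter on the page precisely because it outsources the work. Two small points worth tightening: your series-resistance step ``it sees $l_e$ in series with $r_e$'' deserves a sentence saying explicitly that $f$ restricted to $\Gamma \setminus e^\circ$ is a harmonic potential with boundary values $f(e^\pm)$, which is exactly what defines the effective resistance $r_e$; and the dimension count $g' + (|V|-1) = |E|$ silently uses connectedness of $\Gamma$, which is fine here since the paper's dual graphs are connected, but should be flagged.
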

\begin{proof}
  Translating to the notation used by \cite{BF11}, we have that $$\sum_{i=1}^{g'}|\omega_i(e)|^2 = \left\|\frac{1}{l_e} \int_{e} \right\|^2_{\mathrm{L}^2}.$$
  Using \cite[Theorem 5.10]{BF11}, we get that
  $$ \left\|\frac{1}{l_e} \int_{e} \right\|^2_{\mathrm{L}^2} = \frac{F(e)}{l_e} ,$$
  where $F(e)$ is the Foster coefficient defined by Baker and Faber. Now, \cite[Theorem 6.4]{BF11} tells us that
  $$ \frac{F(e)}{l_e} = \frac{1}{l_e + r_e} .$$
\end{proof}

\subsection{Relation between the residues and the dual graph}
\label{subsecRelationBetweenResidues}
Let $\psi_1,\dots,\psi_g$ be a basis of $H^0(\X_{0,\red},\omega_{\X_{0,\red}})$. Let $g' = g(\Gamma_\X)$.
Following the discussion in Section \ref{subsecOmegaX0Red}, we may assume that $\psi_{g'+1},\dots,\psi_g$ are holomorphic i.e.~have zero residues at all nodal points in $\X_{0,\red}$. 

Note that the residues of $\psi_1,\dots,\psi_{g'}$ at the points in $\tilde{\X_{0,\red}}$ that lie over nodes in $\X_{0,\red}$ cannot be arbitrary; they must satisfy the following constraints
\begin{itemize}
\item The residue theorem ensures that the sum of the residues of $\psi_i$ is zero on every irreducible component of $\tilde{\X_{0,\red}}$ for all $1 \leq i \leq g'$.
\item If $P'$ and $P''$ are points in $\tilde{\X_{0,\red}}$ that map to a node $P$ in $\X_0$, then the residues of $\psi_{i}$ at $P'$ and $P''$ sum to zero for all $1 \leq i \leq g'$. 
\end{itemize}

Now pick an arbitrary orientation for each of the edges of $\Gamma_\X$. For an edge $e$, let $e^-$ and $e^+$ denote the initial and the final vertex respectively. For each $\psi_i$ and a node $P \in \X_{0,\red}$, let $C^{P}_i$ denote the residue of $\psi_i$ at the point that lies over $P$ in the irreducible component associated to $e^{-}_{P}$.
The data of the residues of $\psi_i$ defines an element $\omega_i \in \Omega(\Gamma_\X)$ by $\psi_i \mapsto (e_P \mapsto C_{i}^P)$.
Conversely, given $\omega \in \Omega(\Gamma_\X)$, we can get a $\psi \in H^0(\X_{0,\red},\omega_{\X_{0,\red}})$ using the residue theorem. Such an element is uniquely determined up to an element that has no poles on $\tilde{\X_0}$ i.e.~up to an element in the linear span of $\psi_{g'+1},\dots,\psi_g$. 

Summarizing, we have the following short exact sequence of complex vector spaces.
\begin{equation}
\label{eqnSESVS1}
0 \to H^0(\tilde{\X_{0,\red}}, \omega_{\tilde{\X_{0,\red}}}) \to H^0(\X_{0,\red},\omega_{\X_0,\red}) \to \Omega(\Gamma_\X) \to 0
\end{equation}

\begin{lem}
\label{lemHermitianPairing}
We can pick a basis $\psi_1,\dots,\psi_{g}$ of $H^0(\X_{0,\red},\omega_{\X_{0,\red}})$ such that $$\sum_{P} C_j^P \overline{C_k^P}l_{e_P} = \delta_{jk}$$
for all $1 \leq j,k \leq g'$ and $$\int_{\tilde{\X_{0,\red}}} \psi_{j} \wedge \overline{\psi_{k}} = \delta_{jk}$$ for $g' + 1\leq j,k \leq g$.

\end{lem}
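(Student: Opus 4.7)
The strategy is to leverage the short exact sequence \eqref{eqnSESVS1} to split the basis into a ``holomorphic part'' and a ``polar part'', and use positive definiteness of each Hermitian pairing to produce orthonormal bases separately on each piece. More precisely, the space $H^0(\tilde{\X_{0,\red}}, \omega_{\tilde{\X_{0,\red}}})$ has dimension $g - g'$ and carries the positive definite Hermitian pairing inherited from summing the standard Hodge pairing over the normalized components. By Gram--Schmidt, there exists an orthonormal basis $\psi_{g'+1}, \dots, \psi_g$ of this subspace with respect to that pairing, which gives the second required relation. Because these forms have no poles on $\tilde{\X_{0,\red}}$, all of their residues vanish, so $C_j^P = 0$ for $j \geq g'+1$ and the first orthonormality relation is trivially satisfied when either index exceeds $g'$.

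For the remaining basis vectors, I would work on the quotient side. The space $\Omega(\Gamma_\X)$ has dimension $g'$ and carries the positive definite Hermitian pairing $\langle \omega, \omega' \rangle = \sum_{e} \omega(e) \overline{\omega'(e)} l_e$, so by Gram--Schmidt we may pick an orthonormal basis $\omega_1, \dots, \omega_{g'}$. Surjectivity of the residue map in \eqref{eqnSESVS1} lets me lift each $\omega_i$ to some $\psi_i \in H^0(\X_{0,\red}, \omega_{\X_{0,\red}})$, and by construction the residue-assembling map of Section \ref{subsecRelationBetweenResidues} sends $\psi_i \mapsto \omega_i$. Under the orientation convention, $\omega_i(e_P) = C_i^P$, so
\[
\sum_P C_j^P \overline{C_k^P}\, l_{e_P} \;=\; \sum_e \omega_j(e) \overline{\omega_k(e)} \, l_e \;=\; \langle \omega_j, \omega_k \rangle \;=\; \delta_{jk},
\]
which is the first required relation. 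The combined collection $\psi_1, \dots, \psi_g$ is a basis of $H^0(\X_{0,\red}, \omega_{\X_{0,\red}})$ by the exactness of \eqref{eqnSESVS1}.

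I do not expect any substantive obstacle. One mild bookkeeping point to verify is that the orientation convention used to pick the ``initial endpoint'' residue $C_i^P$ matches the one used in the definition of $\Omega(\Gamma_\X)$: the residue theorem applied on each component $\tilde E_v$, combined with the fact that the two residues over any node sum to zero, is precisely what guarantees that the tuple $(C_i^P)_P$ satisfies the harmonic condition $\sum_{e^+ = v} \omega(e) = \sum_{e^- = v} \omega(e)$, so the map into $\Omega(\Gamma_\X)$ is well-defined and the pairings match on the nose. A second point worth noting is that the lifts $\psi_1, \dots, \psi_{g'}$ are only determined modulo $H^0(\tilde{\X_{0,\red}}, \omega_{\tilde{\X_{0,\red}}})$; since such ambiguity changes $\psi_i$ only by residue-free forms, neither orthonormality relation is affected, and we may pick any lifts.
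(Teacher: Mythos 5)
Your proposal is correct and follows essentially the same approach as the paper: exploit the short exact sequence \eqref{eqnSESVS1}, pick an orthonormal basis of $H^0(\tilde{\X_{0,\red}},\omega_{\tilde{\X_{0,\red}}})$ for the second group of $\psi_i$'s, and pick lifts of an orthonormal basis of $\Omega(\Gamma_\X)$ for the first. The paper's proof is terser, but your added remarks about residue-free ambiguity of the lifts and the orientation/harmonicity check are exactly the implicit points the paper glosses over.
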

\begin{proof}
  We have a positive definite Hermitian pairing on $H^0(\tilde{\X_{0,\red}},\omega_{\tilde{\X_{0,\red}}}) = \bigoplus_i H^0(E_i,\omega_{E_i})$ given by the Hermitian pairing on each direct summand. We pick $\psi_{g'+1},\dots,\psi_g$ to be orthonormal with respect to this pairing.
  
 We pick $\psi_1,\dots,\psi_{g'}$ so that the induced $\omega_1,\dots,\omega_{g'} \in \Omega(\Gamma)$ form an orthonormal basis with respect to the pairing \eqref{eqnMetricGraphPairing}.
\end{proof}

It follows immediately from Proposition \ref{propBF11} that for a node $P \in \X_0$ and for the choice of $\psi_i$'s in Lemma \ref{lemHermitianPairing}, $$\sum_{i=1}^{g'} |C_i^P|^2 = \frac{1}{l_{e_P} + r_{e_P}},$$ which is the coefficient of $dx|_{e_P}$ that shows up in the Zhang measure.

\subsection{Relation between $\omega_{\X_{0,\red}}$  and the canonical bundle on $\X$}
\label{subsecRelationBetweenOmegaX0redAndTheCanonicalBundle}
Let $\X$ be an nc model of $X$. 
Let $\omega_{\X} \simeq \Omega_{\X}^2$ denote the canonical line bundle of $\X$ i.e.~the sheaf of 2-forms on $\X$. Note that we have an isomorphism $\Omega_{\X/\D} \simeq \omega_{\X}$ between the sheaf of relative holomorphic $1$-forms and the canonical line bundle. This isomorphism is given by `unwedging $dt$', where $t$ is the coordinate on $\D$.

Note that $\X_0, \X_{0,\red}$ are Cartier divisors on $\X$ and we can consider the line bundle $$\L := \omega_{\X}(-\X_0 + \X_{0,\red}) := \omega_{\X} \otimes_{\O_\X} \O_{\X}(-\X_0+\X_{0,\red}).$$

Since $\X_0 = \div(t)$ is a principal divisor, we have a canonical isomorphism $\L \simeq \omega_{\X}(\X_{0,\red})$

For $t \in \D^*$, note that $$\L|_{X_t} \simeq \omega_{\X}(\X_{0,\red})|_{X_t} \simeq \omega_{\X}|_{X_t} \simeq \Omega_{\X/\D}|_{X_t} \simeq \omega_{X_t}.$$

For the central fiber, we can use adjunction formula \cite[9.1.37]{Liu02} to conclude that
$$ \L|_{\X_{0,\red}} \simeq \omega_{\X_0}(\X_{0,\red})|_{\X_{0,\red}} \simeq \omega_{\X_{0,\red}} . $$

\begin{lem}
 \label{lemBlowupIsoOnH0Omega}
  Let $\X'$ be a nc model obtained from $\X$ by a single blowup at a closed point in $\X_{0}$. Let $q_0 : \X'_{0,\red} \to \X_{0,\red}$ be the map induced by the blowup map $q : \X' \to \X$. Then, we have an isomorphism obtained by ``pulling back differential forms''.
  $$q_0^* :  H^0(\X_{0,\red},\omega_{\X_{0,\red}}) \xrightarrow{\sim} H^0(\X'_{0,\red},\omega_{\X'_{0,\red}}) .$$
\end{lem}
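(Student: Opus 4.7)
The plan is to construct the pullback map $q_0^*$ explicitly via the description of $H^0(\X_{0,\red}, \omega_{\X_{0,\red}})$ given in Section \ref{subsecOmegaX0Red}, namely as a space of meromorphic $1$-forms on $\tilde{\X_{0,\red}}$ with at worst simple poles at preimages of nodes and residues pairing to zero, and then to verify bijectivity from injectivity plus a dimension count. The blown-up point $P$ is either a smooth point of $\X_{0,\red}$, or a node (either between two distinct components or a self-intersection). In both situations the blowup introduces a single new rational component $\tilde{E}^* \cong \P^1$ into the normalization of $\X'_{0,\red}$: in the smooth case $\tilde{E}^*$ is glued to the relevant strict transform at one new node, while in the nodal case $\tilde{E}^*$ is glued at two new nodes, one to each branch through $P$. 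All other components and their normalizations are unchanged. By the blowup analysis in Section \ref{subsecDualGraph} the quantities $g(\Gamma_\X)$ and $\sum_v g(v)$ are preserved, so by the short exact sequence computation in Section \ref{secCanonicalSheaf} the source and target of $q_0^*$ have the same finite dimension, and it thus suffices to produce an injective linear map.

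I would define $q_0^*$ by sending $\psi$ to the section of $\omega_{\X'_{0,\red}}$ whose restriction to each strict transform agrees with $\psi$ on the corresponding old component, and whose restriction to $\tilde{E}^*$ is: the zero form in the smooth case; and in the nodal case the unique meromorphic form $c\,dz/z$ on $\P^1$ (in coordinates placing the two new marked points at $0$ and $\infty$) whose residues at the two new marked points cancel those of $\psi$ at the two preimages of $P$. The residue-pairing condition at each new node holds by construction: when $P$ is smooth, $\psi$ is holomorphic there so all relevant residues are zero; when $P$ is a node, the residue of $\psi$ on one branch is cancelled by the residue of $c\,dz/z$ at the corresponding end of $\tilde{E}^*$. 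Since $q_0$ restricts to an isomorphism away from the exceptional locus and this construction clearly agrees with the geometric pullback of $1$-forms on that open, the map $q_0^*$ really is given by pulling back differential forms. Injectivity is immediate, because if $q_0^*\psi = 0$ then $\psi$ vanishes on each strict transform, hence on every irreducible component of $\X_{0,\red}$, so $\psi = 0$.

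The main subtlety, and the step I would treat most carefully, is the nodal case: the choice of the single constant $c$ on $\tilde{E}^*$ is overdetermined \emph{a priori}, since it is constrained by residue matching at two different new nodes simultaneously, and the fact that both constraints yield the same $c$ is precisely the original residue-pairing relation $C^P_{E_1} + C^P_{E_2} = 0$ coming from $\psi$. Once this compatibility is verified (and the self-node subcase handled in parallel, where the two branches belong to the same component), the remaining steps are formal: injectivity combined with the dimension equality from Section \ref{secCanonicalSheaf} upgrades the explicit linear map $q_0^*$ to an isomorphism, completing the proof.
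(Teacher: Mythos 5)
Your proof is correct, and it takes a genuinely different route from the paper's. The paper constructs $q_0^*$ sheaf-theoretically: it combines the blowup formula $q^*(\omega_\X)\otimes\O_{\X'}(E)=\omega_{\X'}$ with $q^*(\X_{0,\red})=\X'_{0,\red}+(b-1)E$ to deduce $q_0^*(\omega_{\X_{0,\red}})=\omega_{\X'_{0,\red}}((b-2)E)$, and then defines $q_0^*$ as the composite $H^0(\X_{0,\red},\omega_{\X_{0,\red}})\to H^0(\X'_{0,\red},q_0^*\omega_{\X_{0,\red}})\to H^0(\X'_{0,\red},\omega_{\X'_{0,\red}})$; well-definedness is automatic. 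You instead work directly in the Rosenlicht description of sections as meromorphic $1$-forms on the normalization with matched residues, build the image section by hand (zero on the exceptional $\P^1$ in the smooth case, the unique $c\,dz/z$ with matching residues in the nodal case), and verify that the two residue constraints on the new $\P^1$ are simultaneously satisfiable precisely because of the original residue relation at $P$. This is exactly the compatibility that the sheaf-theoretic formula encodes invisibly; making it explicit is more hands-on but also more illuminating, since it shows concretely why no information is lost or created on the exceptional curve. From that point both proofs are identical: sections are determined by their restrictions to the non-exceptional components, giving injectivity, and the dimension equality (invariance of $g(\Gamma)$ and $\sum_v g(v)$ under blowup, which you correctly invoke and which is slightly more general than the paper's appeal to ``dimension $g$'') upgrades injectivity to an isomorphism. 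One small caveat: the claim that your explicitly constructed map ``really is given by pulling back differential forms'' is asserted rather than proved; the paper sidesteps this because its $q_0^*$ is the pullback by construction, whereas in your setup it would take a brief additional argument that the forced extension over $E$ agrees with whatever notion of pullback one adopts there. This does not affect the validity of the isomorphism itself.
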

\begin{proof}
We first describe the map $q_0^*$. To do this, we use a few elementary facts regarding blowups. Let $E$ denote the exceptional divisor and let $b$ denote its multiplicity in $q^*(\X_{0,\red})$. Note that $b$ is either 1 or 2, depending on whether we blowup at a smooth or at a nodal point in $\X_0$.
\begin{eqnarray*}
  q^*(\omega_{\X}) \otimes \O_{\X'}(E)= \omega_{\X'}\\
  q^*(\X_{0,\red}) = \X_{0,\red}' + (b-1)E 
\end{eqnarray*}

Using the above facts, we conclude that $$q^*(\omega_{\X}(\X_{0,\red})) = \omega_{\X'}(\X_{0,\red}' - E + (b-1)E).$$

Restricting the above equation to $\X_{0,\red}'$, we get
$$ q_0^*(\omega_{\X_{0,\red}}) = \omega_{\X'_{0,\red}}((b-2)E)  .$$

The following composition is the map $q_0^*$ and we claim that it is an isomorphism.
$$H^0(\X_{0,\red},\omega_{\X_{0,\red}}) \to H^0(\X'_{0,\red},q^*(\omega_{\X_0,\red})) \to H^0(\X'_{0,\red},\omega_{\X'_{0,\red}}).$$
Note that both $H^0(\X_{0,\red},\omega_{\X_{0,\red}})$ and $ H^0(\X'_{0,\red},\omega_{\X'_{0,\red}})$ are vector spaces of dimension $g$. So, it is enough to show that $q_0^*$ is injective. Note that any section $\psi \in H^0(\X_{0,\red},\omega_{\X_{0,\red}})$ is determined by all restrictions $\psi|_{E_i}$ for all the irreducible components $E_i$ of $\tilde{\X_{0,\red}}$. Also, $q_0^*\psi|_{{E_i'}} = \psi|_{E_i}$, where $E_i'$ is the strict transform of $E_i$. Thus, $\psi$ is determined by $q_0^*\psi$ and the map $q^*_0$ is injective.
\end{proof}

\begin{lem}
\label{lemGrauert}
Suppose that $X$ has semistable reduction. Let $\X$ be an nc model of $X$. Then there exist an $ r \in (0,1)$ and 2-forms $\theta_1,\dots,\theta_g \in H^0(r\D,\omega_\X(-\X_0 + \X_{0,\red}))$  such that $\theta_1|_{X_t},\dots,\theta_g|_{X_t}$ is a basis of $H^0(X_t, \omega_{X_t})$ for all $t \in r\D^*$  and $\theta_1|_{\X_{0,\red}},\dots,\theta_g|_{\X_{0,\red}}$ is a basis of $H^0(\X_0,\omega_{\X_{0,\red}})$. 
\end{lem}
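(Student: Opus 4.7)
The plan is to apply Grauert's theorem and cohomology-and-base-change to $\pi_*\L$, where $\L := \omega_\X(-\X_0+\X_{0,\red}) \simeq \omega_{\X/\D}(\X_{0,\red})$. Section~\ref{subsecRelationBetweenOmegaX0redAndTheCanonicalBundle} already shows $\L|_{X_t}\simeq\omega_{X_t}$ and $\L|_{\X_{0,\red}}\simeq\omega_{\X_{0,\red}}$, so it suffices to prove (i) $\pi_*\L$ is locally free of rank $g$ on a neighborhood $r\D$ of $0\in\D$, and (ii) the natural restriction $\pi_*\L\otimes_{\O_\D} k(0) \to H^0(\X_{0,\red},\omega_{\X_{0,\red}})$ is an isomorphism. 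Granted (i) and (ii), I would choose $\theta_1,\dots,\theta_g \in H^0(r\D,\pi_*\L) = H^0(\pi^{-1}(r\D),\L)$ whose restrictions to $\X_{0,\red}$ form a basis of $H^0(\X_{0,\red},\omega_{\X_{0,\red}})$; Nakayama then forces them to be an $\O_{r\D}$-basis of $\pi_*\L$ (after shrinking $r$ if needed), so their restrictions to $X_t$ automatically form a basis of $H^0(X_t,\omega_{X_t})$ for each $t\in r\D^*$.

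For (i), first use relative Serre duality on the flat projective curve family $\pi\colon\X\to\D$ to identify $R^1\pi_*\L \simeq (\pi_*\O_\X(-\X_{0,\red}))^\vee$. The short exact sequence $0\to\O_\X(-\X_{0,\red})\to\O_\X\to\O_{\X_{0,\red}}\to 0$, together with $\pi_*\O_\X = \O_\D$ and $\pi_*\O_{\X_{0,\red}}$ being the skyscraper sheaf at $0$ with stalk $\C$ (using connectedness of $\X_{0,\red}$), identifies $\pi_*\O_\X(-\X_{0,\red})$ with the ideal sheaf of $0\in\D$, a line bundle. Hence $R^1\pi_*\L$ is a line bundle near $0$. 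Cohomology and base change (with $R^2\pi_*=0$) then gives $h^1(\X_t,\L|_{\X_t})=1$ for all $t$ near $0$, and constancy of the Euler characteristic in flat families yields $h^0(\X_t,\L|_{\X_t})=g$ for all such $t$; a second application of the theorem makes $\pi_*\L$ locally free of rank $g$ with isomorphic base-change maps.

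For (ii), tensor the above sequence by the flat line bundle $\L$ to obtain
\[
 0 \to \omega_\X(-\X_0) \to \L \to i_*\omega_{\X_{0,\red}} \to 0,
\]
where $i\colon\X_{0,\red}\hookrightarrow\X$. Writing $\omega_\X(-\X_0) \simeq \omega_{\X/\D}\otimes\pi^*\O_\D(-0)$ and applying the projection formula gives $R^1\pi_*(\omega_\X(-\X_0)) \simeq R^1\pi_*\omega_{\X/\D}\otimes\O_\D(-0) \simeq \O_\D(-0)$, which is torsion-free. In the pushforward long exact sequence, the connecting homomorphism $\pi_*i_*\omega_{\X_{0,\red}}\to R^1\pi_*(\omega_\X(-\X_0))$ is an $\O_\D$-linear map from a skyscraper at $0$ into a torsion-free sheaf, so it must vanish. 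Hence $\pi_*\L \to \pi_*i_*\omega_{\X_{0,\red}}$ is surjective at the stalk at $0$; being a surjection of $g$-dimensional vector spaces, it is an isomorphism. The main delicate point is the interplay between the (possibly non-reduced) scheme-theoretic fiber $\X_0$, on which the Grauert/base-change machinery operates most naturally, and its reduction $\X_{0,\red}$, to which the basis condition must be transferred; the vanishing of the torsion-to-torsion-free connecting map is precisely the mechanism that achieves this transfer.
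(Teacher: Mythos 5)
Your route is genuinely different from the paper's. The paper first proves the lemma when $\X$ is a semistable model (so $\X_0=\X_{0,\red}$), where the constancy of $h^0$ makes Grauert's theorem apply directly, and then propagates to a general nc model by induction on blowups using Lemma~\ref{lemBlowupIsoOnH0Omega}. You instead work directly on an arbitrary nc model. The heart of your argument, step (ii), is a nice and correct observation: tensoring $0\to\O_\X(-\X_{0,\red})\to\O_\X\to\O_{\X_{0,\red}}\to 0$ by the flat line bundle $\L$ and noting that the connecting map $\pi_*i_*\omega_{\X_{0,\red}}\to R^1\pi_*(\omega_\X(-\X_0))\simeq\O_\D(-0)$ must vanish because it maps a skyscraper into a torsion-free sheaf. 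Combined with $\dim_\C H^0(\X_{0,\red},\omega_{\X_{0,\red}})=g$ (established in Section~\ref{secCanonicalSheaf} under the semistable-reduction hypothesis) this gives the key surjectivity without ever passing through the semistable case. This buys you a self-contained argument that bypasses the blowup induction and Lemma~\ref{lemBlowupIsoOnH0Omega}; the cost is heavier machinery (Grothendieck duality, cohomology and base change).

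There is, however, a gap in step (i). Relative Serre duality for the perfect complex $R\pi_*\L$ over the one-dimensional regular base gives $\pi_*\O_\X(-\X_{0,\red}) \simeq (R^1\pi_*\L)^\vee$, not $R^1\pi_*\L \simeq (\pi_*\O_\X(-\X_{0,\red}))^\vee$. The latter would follow from the former only if $R^1\pi_*\L$ is reflexive, i.e.\ torsion-free over $\D$ — which is exactly what you are trying to establish. So the claim that $R^1\pi_*\L$ is a line bundle (and the ensuing conclusion $h^1(\X_0,\L|_{\X_0})=1$) is circular as written. Fortunately this part is both dispensable and easy to repair: since $\L$ is a line bundle and $\pi$ is flat and surjective, $\pi_*\L$ is torsion-free, hence automatically locally free over the one-dimensional regular base $\D$, of rank equal to its generic rank $h^0(X_t,\omega_{X_t})=g$; and the base-change maps at points $t\in\D^*$ are isomorphisms because $h^0$ and $h^1$ of $\omega_{X_t}$ are constant there, which is all your Nakayama argument needs. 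With (i) replaced by this simpler observation and (ii) kept as is, the proof is complete and correct.
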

\begin{proof}
As above, let $\L$ denote the line bundle $\omega_{\X}(-\X_0 + \X_{0,\red})$.

First suppose that $\X$ is a semistable model of $X$. Then $\X_{0,\red} = \X_0$ and we have that $$\dim_\C H^0(\X_0,\omega_{\X_0}) = \dim_\C H^0(\X_{0,},\omega_{\X_{0,\red}}) = g = \dim_\C H^0(X_t,\omega_{X_t})$$ for all $t \in \D^*$ and thus the dimension of $H^0(\X_t, \L|_{\X_t})$ for all $t \in \D$ remains constant. By a theorem of Grauert \cite{Gra60} (see \cite[Cor 3.12.19]{Har77} for an algebraic version), we get that  $\pi_*(\L)$ is a locally free sheaf and its fiber over 0, $\pi_*(\L)_{(0)}$, is isomorphic to $H^0(\X_{0},\omega_{\X_{0}})$. Now we pick $\theta_1,\dots,\theta_g\in \pi_*(\L)_{(0)}$ that map to a basis in $H^0(\X_{0},\omega_{\X_{0}})$. Then there exists an $0 < r < 1$ and $\tilde{\theta_1},\dots,\tilde{\theta_g} \in H^0(r\D,\L)$ which restrict to  a basis $\psi_1,\dots,\psi_g$ of $H^0(\X_{0,\red},\omega_{\X_{0,\red}})$. Since being linearly independent is an open condition, we may pick a smaller $r$ so that $\theta_1,\dots,\theta_g$ remain linearly independent (and hence form a basis) after restricting to $X_t$ for $|t| \ll r$. This completes the proof when $\X$ is a semistable model of $X$.

Any nc model can be obtained from the minimal nc model by a sequence of blowups at closed points in the central fiber. By induction, we may reduce to the proof to the case of a single blowup. 

Suppose now that the result is true for a nc model $\X$; we would like to prove the result for a nc model $\X'$ obtained by a single blowup $q: \X' \to \X$ at a closed point in $\X_0$. Let $E$ denote the exceptional divisor of the blowup.

Let $\theta_1,\dots, \theta_g$ be sections of $\omega_{\X_0}(-\X_0 + \X_{0,\red})$ in a neighborhood of $\X_{0,\red}$ that satisfy the required conditions for the model $\X$. We claim that $q^*\theta_1,\dots,q^*\theta_g$ satisfy the required conditions for $\X'$, where $q^*$ denotes the usual pullback of differential forms.

Since $q|_{X}$ is an isomorphism, it is clear that $(q^*\theta_1)|_{X_t},\dots,(q^*\theta_g)|_{X_t}$ form a basis of $H^0(X_t,\omega_{X_t})$. The fact that $(q^*\theta_1)|_{\X'_{0,\red}},\dots,(q^*\theta_g)|_{\X'_{0,\red}}$ is also a basis follows by Lemma \ref{lemBlowupIsoOnH0Omega}. 
\end{proof} 




\section{Asymptotics}
\label{secAsymptotics}
In this section, we compute some asymptotics to describe the Bergman measure in terms on $\theta_1,\dots,\theta_g$.
Suppose that $X$ has semistable reduction. We pick an nc model $\X$ of $X$. Let $\Gamma = \Gamma_{\X}$ denote its dual graph and let $g' = g(\Gamma)$.  Then, we have that $g = g' + \sum_{v \in V(\Gamma)} g(v)$. By Lemma \ref{lemGrauert}, we can find two-forms $\theta_1,\dots,\theta_g$ defined in a neighborhood of $\X_0$ such that their restrictions form a basis of $H^0(X_t,\omega_{X_t})$ and $H^0(\X_{0,\red},\omega_{\X_{0,\red}})$ for all $t \in \D^*$. Let  $\psi_i = \theta_i|_{\X_{0,\red}}$. After applying a (complex) linear transformation to $\theta_i$'s, we may assume that the $\psi_i$ satisfy the conditions in Lemma \ref{lemHermitianPairing}.

\subsection{Relating $\theta_i$, $\theta_{i,t}$ and $\psi_{i}$}
\label{subsecRelatingThetaThetaITAndPsi}
For doing computations, we would like to express $\theta_{i,t} := \theta_{i}|_{X_t}$ and $\psi_i$ explicitly in terms of $\theta_{i}$ in a local coordinate chart.

Let $U$ be a coordinate chart adapted to an irreducible component $E \subset \X_0$ of multiplicity $a$. Let $(z,w)$ be coordinates on $U$ such that $E = \{z = 0\}$ and $t = z^a$. Then, $\theta_i$ must vanish along $E$ to the order $a-1$ and has a power series expansion of the form $$\theta_i = \sum_{\alpha \geq a -1, \beta \in \N}c^{(i)}_{\alpha \beta} z^\alpha w^\beta dw \wedge dz.$$ Then, $\theta_{i,t}$ is just obtained by `unwedging $dt$'. To do this, note that $dt = a z^{a-1}dz$ and thus 
$$\theta_{i,t} = \sum_{\alpha \geq a - 1, \beta \in \N}\frac{c^{(i)}_{\alpha \beta}}{a} t^{\frac{\alpha-a+1}{a}} w^{\beta}dw.$$ Here we think of the coordinates on $X_t$ as being given by $0 < |w| < 1$. Taking the $a$-th root of $t$ corresponds to the fact that $U \cap X_t$ is disconnected and has $a$ connected components. Choosing a connected component corresponds to choosing an $a$-th root of $t$. Note that we will be somewhat sloppy while writing fractional powers of $t$. This should be interpreted as being true in a small enough chart where the roots are well defined. 

Tracing through the isomorphism in Section \ref{subsecRelationBetweenOmegaX0redAndTheCanonicalBundle}, we see that $\psi_i$ is obtained from $\theta_i$ by getting rid of $z^{a-1}dz$ and then setting $z = 0$. Thus, $$\psi_i = \sum_{\beta \in \N} c_{a-1,\beta}^{(i)}w^\beta dw$$
and we see that $\lim_{t \to 0} \theta_{i,t} = \frac{1}{a}\psi_i$, where the limit is taken pointwise as a function of $w$. 

Now consider a coordinate chart $U$ adapted to a node $P = E_1 \cap E_2$. We allow the possibility that $E_1$ and $E_2$ correspond to two local branches of the same irreducible component. Let the coordinates on $U$ be $(z,w)$ such that $|z|,|w| < 1$, $E_1 = \{ z = 0 \}$, $E_2 = \{ w = 0\}$ and $t = z^aw^b$. 
On $X_t \cap U$, we can either use the local coordinate $z$ with $|t|^{1/a} < |z| < 1$ or the coordinate $w$ with $|t|^{1/b
} < |w| < 1$. We also have coordinates $w$ on $E_1 \cap U$ for $|w| < 1$  and coordinates $z$ on $E_2 \cap U$ for $|z| < 1$. Also note that $X_t \cap U \to \{ w \in \D^* \mid  |t|^{1/b} < |w| < 1 \}$ is a $a$-sheeted cover with the fibers corresponding to choosing an $a$-th root to determine $z = (\frac{t}{w^b})^{1/a}$. Since $\theta_i$ must vanish along $E_1$ to order $a-1$ and along $E_2$ to order $b-1$, we can write
$$\theta_i = \sum_{\alpha \geq a-1, \beta \geq b -1 } c_{\alpha, \beta}^{(i)} z^\alpha w^\beta dw \wedge dz$$ on $U$.
Let us compute $\theta_{i,t}$ and $\psi_i$ in the $w$-coordinates. Using $dt = az^{a-1}w^bdz + bz^aw^{b-1}dw$, we have that
$$\theta_{i,t} = \sum_{\alpha \geq a-1, \beta \geq b -1 } \frac{c_{\alpha, \beta}^{(i)}}{a} \left( \frac{t}{w^b} \right)^{\frac{\alpha - a - 1}{a}} w^{\beta-b} dw .$$
To obtain $\psi_i$ on $E_1$, we need to get rid of $z^{a-1}w^{b-1}d(zw)$ and set $z = 0$. This gives us 
$$\psi_i =  \sum_{ \beta \geq b - 1} c_{a-1, \beta}^{(i)} w^{\beta - b } dw.$$

Similarly, we can compute $\theta_{i,t}$ in the $z$ coordinates and can obtain $\psi_i$ on $E_2$.

Once again we see that $\lim_{t \to 0} \theta_{i,t} = \frac{1}{a}\psi_i$ for fixed $w$ and $\lim_{t \to 0} \theta_{i,t} = \frac{1}{b} \psi_i$ for fixed $z$.

From the local description, we also see that $\psi_i|_{E_1}$ has a simple pole at $P$ with residue $c_{a-1,b-1}^{(i)}$ and $\psi_i|_{E_2}$ has a simple pole at $P$ with residue $-c_{a-1,b-1}^{(i)}$. Set $C_i^P = c^{(i)}_{a-1,b-1}$ for ease of notation. 
 


\subsection{Bergman measure in terms of $\theta_1,\dots,\theta_g$}
For $t \in \D^*$, let $A(t)$ be the complex $g \times g$ skew-Hermitian matrix with $(j,k)$-th entries $$A(t)_{j,k} = (\theta_{j,t},\theta_{k,t}) = \frac{i}{2}\int_{X_t} \theta_{j,t} \wedge \overline{\theta_{k,t}}.$$

Then the Bergman measure (as a $(1,1)$-form) on $X_t$ is given by$$\mu_t = \frac{i}{2}\sum_{j,k} (\overline{A(t)})^{-1}_{j,k} \theta_{j,t} \wedge \overline{\theta_{k,t}}.$$

To understand the asymptotics of the Bergman measure, we need to understand the entries of the matrix $A(t)^{-1}$ as $t \to 0$. We start by understanding the entries of the matrix $A(t)$. Similar asymptotics can be found in \cite[Proposition 4.1]{HJ96} and \cite[Equation (16.7)]{dJon19}.

\begin{lem}
\label{lemAsymptotics}
For a suitable choice of $\theta_1,\dots,\theta_g$, the matrix $A$ is of the form $$A =
\begin{pmatrix}
  B & C \\
  D & F
\end{pmatrix},$$
  
\noindent where $B =2\pi \log|t|^{-1} \I_{g'} + O(1)$ is  a $g' \times g'$ matrix, $C = O(1)$, $D = O(1)$ and $F = O(1)$ is a $(g-g') \times (g-g')$ matrix as $t \to 0$. Furthermore, we have that $\lim_{t \to 0} F(t) = \mathbf{I}_{g-g'}$  
\end{lem}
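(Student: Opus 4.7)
The strategy is to compute each entry $A(t)_{jk} = \frac{i}{2}\int_{X_t}\theta_{j,t}\wedge\overline{\theta_{k,t}}$ by localizing with a finite cover of a neighborhood of $\X_0$ by adapted coordinate charts (as in Section \ref{secHybridSpace}), together with a subordinate partition of unity; the integral $A(t)_{jk}$ splits additively into chart-by-chart contributions, each of which we analyze using the explicit local expansions of $\theta_{i,t}$ derived in Section \ref{subsecRelatingThetaThetaITAndPsi}. The entire logarithmic divergence will be produced by residues at the nodes.

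On a chart $U$ adapted to an irreducible component $E$ of multiplicity $a_E$, the fiber $X_t \cap U$ is an $a_E$-sheeted cover of a punctured disk in the $w$-coordinate, and the expansion of Section \ref{subsecRelatingThetaThetaITAndPsi} gives $\theta_{i,t} = \tfrac{1}{a_E}\psi_i|_E + O(|t|^{1/a_E})$ uniformly on compact subsets. Dominated convergence then yields
$$ \tfrac{i}{2}\int_{X_t \cap U}\theta_{j,t}\wedge\overline{\theta_{k,t}} \;\longrightarrow\; \tfrac{1}{a_E}\cdot\tfrac{i}{2}\int_{E \cap U}\psi_j\wedge\overline{\psi_k}, $$
which is $O(1)$. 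On a chart $U$ adapted to a node $P = E_1 \cap E_2$ of multiplicities $a, b$, the fiber is an $a$-sheeted cover of the annulus $\{|t|^{1/b} < |w| < 1\}$, and the expansion shows
$$ \theta_{i,t} \;=\; \tfrac{C_i^P}{a}\,\tfrac{dw}{w}\;+\;R_{i,t}, $$
where the remainder $R_{i,t}$ has only terms carrying at least one positive power of $(t/w^b)^{1/a}$ or at least one non-negative power of $w$ beyond the residue term. The pure-residue cross-product integrates to
$$ \tfrac{i}{2}\int_{X_t \cap U}\tfrac{C_j^P \overline{C_k^P}}{a^2}\,\tfrac{dw\wedge d\bar w}{|w|^2} \;=\; a\cdot\tfrac{C_j^P \overline{C_k^P}}{a^2}\cdot 2\pi\cdot\tfrac{\log|t|^{-1}}{b} \;=\; 2\pi\, l_{e_P}\, C_j^P\,\overline{C_k^P}\,\log|t|^{-1}, $$
while a direct estimation (using $|t/w^b|^{1/a} \leq 1$ and $\int_{|t|^{1/b}}^{1} r^{\alpha}\,dr = O(1)$ for $\alpha > -1$) shows that all cross terms $\tfrac{dw}{w} \cdot \overline{R}$ and $R \cdot \overline{R}$ contribute only $O(1)$ to the integral.

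Summing with the partition of unity, for $1 \leq j, k \leq g'$ the nodal contributions aggregate to $2\pi\log|t|^{-1}\sum_P l_{e_P}C_j^P\overline{C_k^P} + O(1) = 2\pi \log|t|^{-1}\delta_{jk} + O(1)$ by the normalization in Lemma \ref{lemHermitianPairing}, giving the $B$-block. Whenever $j > g'$ or $k > g'$ the relevant residues vanish, killing every logarithmic term, so $C, D, F = O(1)$. For the $F$-block ($j, k > g'$) we further need $\lim_t F(t) = \I_{g-g'}$: since $\psi_j, \psi_k$ vanish on every rational component of $\X_{0,\red}$, and Remark \ref{rmkMultiplicities} forces the positive-genus components (where $\psi_j, \psi_k$ actually live) to have multiplicity $1$, the $\tfrac{1}{a_E}$-factors from the irreducible-component limit disappear on those components, while the nodal contributions vanish in the limit (either because $\psi_j$ vanishes on one side of the node, or by bisecting the annulus into an ``$E_1$-half'' and an ``$E_2$-half'' and applying dominated convergence separately on each). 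Adding up, $\lim_t A(t)_{jk} = \tfrac{i}{2}\int_{\tilde{\X_{0,\red}}}\psi_j\wedge\overline{\psi_k} = \delta_{jk}$ by Lemma \ref{lemHermitianPairing}. The main technical hurdle is verifying, term by term, that the cross and subleading integrals on a nodal chart are genuinely $O(1)$ without hidden logarithms, and arranging the annulus bisection in the $F$-block limit so that the two halves combine to give precisely $\int_{\tilde{\X_{0,\red}}}\psi_j\wedge\overline{\psi_k}$ without double counting.
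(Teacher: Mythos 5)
Your proposal is correct and takes essentially the same approach as the paper: partition of unity over adapted charts, dominated convergence near smooth points of $\X_{0,\red}$, extraction of the $\log|t|^{-1}$-divergence from the residue term $C_i^P/(aw)\,dw$ near nodes, the normalization of Lemma \ref{lemHermitianPairing} for the $B$-block, vanishing residues for $i>g'$ for the $O(1)$ blocks, and Remark \ref{rmkMultiplicities} plus the annulus bisection for $\lim_t F = \I_{g-g'}$. The only cosmetic differences are that you integrate over the full $w$-annulus for the $B$-block (the paper splits it into two halves using $|z|$ and $|w|$ cutoffs), and that you write out the remainder $R_{i,t}$ explicitly where the paper abbreviates it as $O(1)$.
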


\begin{proof}
We pick $\theta_1,\dots, \theta_g$ such that $\psi_1,\dots,\psi_g$ satisfy the conditions of Lemma \ref{lemHermitianPairing}.

  By using partitions of unity, to understand the asymptotics of $\int_{X_t} \theta_{j,t}\wedge \overline{\theta_{k,t}}$ it is enough to understand the asymptotics of $\int_{U \cap X_t} \theta_{j,t} \wedge \overline{\theta_{k,t}}$ for an adapted coordinate chart $U$.

  Let $U$ be a coordinate chart adapted to an irreducible component $E \subset \X_0$ occurring with multiplicity $a$. For all $i$, we have that $\theta_{i,t} \to \frac{\psi_i}{a}$ as $t \to 0$. By shrinking $U$ if needed, we may further assume that $\theta_{i,t} \to \frac{\psi_i}{a}$ uniformly. Since all the $\psi_i$'s are bounded on $U \cap E$, by the dominated convergence theorem, we have that $\int_{U \cap X_t} \theta_{j,t} \wedge \overline{\theta_{k,t}} \to \int_{U \cap E} \psi_{j} \wedge \overline{\psi_{k}} $ as $t \to 0$ for all $1 \leq j,k \leq g$.

  If $U$ is a coordinate chart adapted to $P = E_1 \cap E_2$, then we break up the integral as $$\int_{U \cap X_t} \theta_{j,t} \wedge \overline{\theta_{k,t}} = \int_{|t|^{1/2a }<  |z| < 1} \theta_{j,t} \wedge \overline{\theta_{k,t}} + \int_{|t|^{1/2b} < |w| < 1} \theta_{j,t} \wedge \overline{\theta_{k,t}}.$$

On the set $|t|^{1/2b} < |w| < 1$, $\theta_{i,t}(w) = \frac{C^P_i}{aw} + O(1)$ (see the discussion in Section \ref{subsecRelatingThetaThetaITAndPsi}), where the $O(1)$ is with respect to $|w|$ as $|w| \to 0$ uniformly in $t$. Thus,
\begin{multline*}
\frac{i}{2}\int_{U \cap X_t}\theta_{j,t} \wedge \overline{\theta_{k,t}}
= \\
\frac{i}{2}\int_{|t|^{1/2b} < |w| < 1}  \frac{C^P_j\overline{C^P_k}}{a^2|w|^2} dw \wedge d\overline{w} + O(1)
= a \int_0^{2\pi} \int_{|t|^{1/2b}}^1  \frac{C^P_j\overline{C^P_k}}{a^2r} dr + O(1),  
\end{multline*}
where the second $O(1)$ is with respect to $r$ as $r \to 0$ and the factor $a$  appears on the right-hand side because $X_t \cap U \to \{ w \in \D \mid |t|^{1/b} < |w| < 1\}$ is an $a$-sheeted cover.
So,

$$\frac{i}{2}\int_{|t|^{1/2b} < |w| < 1} \theta_{j,t} \wedge \overline{\theta_{k,t}} =   \pi\frac{C^P_j \overline{C^P_{k}}}{ab}\log|t|^{-1} + O(1).$$

Using a similar computation in the $z$ coordinates and using $l_{e_P} = \frac{1}{ab}$ shows that
$$\frac{i}{2}\int_{X_t \cap U} \theta_{j,t} \wedge \overline{\theta_{k,t}} = 2\pi \frac{C^P_j\overline{C^P_k}}{ab}\log|t|^{-1} + O(1) = 2\pi C^P_j\overline{C^P_k}l_{e_P}\log|t|^{-1} + O(1).$$  

Summing up, we see that
\begin{equation}
\label{eqnIntThetaJK}
\frac{i}{2}\int_{X_t} \theta_{j,t} \wedge \overline{\theta_{k,t}} = 2\pi\sum_{\text{nodes} P \in \X_0} C^P_j\overline{C^P_k}l_{e_P} \log|t|^{-1} + O(1) .
\end{equation}

By the choice of $\theta_i$'s, $C^P_i = 0$ for all  $P$ and for all $i > g'$ giving the required asymptotics for the matrices $C,D$.
The asymptotics for $B$ follows from  $\sum_P C_j^P \overline{C_k^P}l_{e_P} = \delta_{jk} $.

To get the asymptotics for the matrix $F$, we need to analyze the $O(1)$-term in Equation (\ref{eqnIntThetaJK}). Recall that $\theta_{i,t} \to \frac{1}{a}\psi_i$ as $t \to 0$ for a fixed $w$ in the set $\{ w \in \D^* \mid |t|^{1/2b} < |w| < 1 \}$, and $\psi_i$ is bounded on $U \cap E_1$ for all $g' \leq i \leq g$. Thus, as $t \to 0$ we have that  
$$
\int_{|t|^{1/2b} < |w| < 1} \theta_{j,t} \wedge \overline{\theta_{k,t}} \to
\frac{1}{a}\int_{U \cap E_1} \psi_{j} \wedge \overline{\psi_{k}}.
$$

Thus, after applying a partition of unity argument, we get that 
$$
\int_{X_t} \theta_{j,t} \wedge \overline{\theta_{k,t}} \to
\sum_E \frac{1}{\mult_{\X_0} E} \int_{E} \psi_{j} \wedge \overline{\psi_{k}}.
$$

For $g'+1 \leq i \leq g$, $\psi_i$ is holomorphic on $\tilde{\X_0}$ on any irreducible component and therefore must be zero on any irreducible component with genus 0. Since $X$ has a semistable reduction, all positive genus irreducible components must occur with multiplicity $1$ (see Remark \ref{rmkMultiplicities}).  Thus, we further get that
$$
\int_{X_t} \theta_{j,t} \wedge \overline{\theta_{k,t}} \to \int_{\X_{0,\red}} \psi_{j} \wedge \overline{\psi_{k}}.
$$

By the choice of $\theta_i$'s, we have that $\frac{i}{2}\int_{\X_{0,\red}} \psi_{j} \wedge \overline{\psi_{k}} = \delta_{j,k}$ and thus we get the asymptotics for $F$. 
\end{proof}

Since $F \to \mathbf{I}_{g-g'}$ as $t \to 0$, $F$ is invertible for $|t|$ small enough. We now apply elementary row reduction operations to $(A,\I_g)$ to obtain the following result.

\begin{cor}
\label{corAsymptoticsAInverse}
For a suitable choice of $\theta_1,\dots,\theta_g$, the matrix $A^{-1}$ is of the form 
$$A^{-1} =
\begin{pmatrix}
  B' & C' \\
  D' & F'
\end{pmatrix}$$
where $$B' = \frac{1}{2\pi\log|t|^{-1}} \I_{g'} + O\left(\frac{1}{(\log|t|^{-1})^2}\right),$$
$$C',D' = O\left(\frac{1}{\log|t|^{-1}}\right) ,$$
$$F' = 
F^{-1} + O\left(\frac{1}{\log|t|^{-1}}\right)$$
and  $$\lim_{t \to 0} F'(t) = \mathbf{I}_{g-g'}.$$ \qed
\end{cor}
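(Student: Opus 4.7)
The plan is to invert $A$ by Schur complement with respect to the top-left block $B$, since the dominance of $B$ by a diagonal term that grows as $\log|t|^{-1}$ is exactly what the block inversion formula needs. First, I would note that for $|t|$ small enough $B$ is invertible: writing $B = 2\pi \log|t|^{-1}\bigl(\I_{g'} + O(1/\log|t|^{-1})\bigr)$ and using a Neumann series, one immediately gets
\[
B^{-1} = \frac{1}{2\pi\log|t|^{-1}}\I_{g'} + O\!\left(\frac{1}{(\log|t|^{-1})^2}\right).
\]
Similarly, $F \to \I_{g-g'}$ guarantees $F$ is invertible for small $|t|$ and $F^{-1} = O(1)$.

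Next, I would form the Schur complement $S := F - D B^{-1} C$. Since $C, D = O(1)$ and $B^{-1} = O(1/\log|t|^{-1})$, we have $D B^{-1} C = O(1/\log|t|^{-1})$, so $S = F + O(1/\log|t|^{-1})$. Because $F$ is invertible with $F^{-1}$ bounded, expanding $S^{-1} = (F(\I + F^{-1}(S-F)))^{-1}$ via a Neumann series gives
\[
S^{-1} = F^{-1} + O\!\left(\frac{1}{\log|t|^{-1}}\right),
\]
and in particular $S^{-1} \to \I_{g-g'}$ since $F \to \I_{g-g'}$.

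Finally, I would plug these into the standard block-inverse identity
\[
A^{-1} = \begin{pmatrix} B^{-1} + B^{-1} C\, S^{-1} D\, B^{-1} & -B^{-1} C\, S^{-1} \\[2pt] -S^{-1} D\, B^{-1} & S^{-1} \end{pmatrix},
\]
and simply combine the size bounds established above. The off-diagonal blocks become $C' = -B^{-1} C S^{-1} = O(1/\log|t|^{-1})$ and $D' = O(1/\log|t|^{-1})$, directly from $B^{-1} = O(1/\log|t|^{-1})$ and $C,D,S^{-1} = O(1)$. The bottom-right block is $F' = S^{-1} = F^{-1} + O(1/\log|t|^{-1})$, with the claimed limit. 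The top-left block is $B' = B^{-1} + B^{-1} C S^{-1} D B^{-1}$; the correction is a product of two factors of $B^{-1} = O(1/\log|t|^{-1})$ and three $O(1)$ factors, hence $O(1/(\log|t|^{-1})^2)$, which combines with the leading behavior of $B^{-1}$ to give exactly the stated expression for $B'$.

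There is no real obstacle here: the entire statement is a routine consequence of Lemma \ref{lemAsymptotics} together with a Neumann-series estimate, and the mention of ``row reduction operations on $(A, \I_g)$'' in the paper is just an alternative way of saying the same thing (eliminate $D$ using the invertible block $B$, then solve the resulting block-triangular system). The only minor point of care is to track that the correction term in $B'$ is genuinely of order $(\log|t|^{-1})^{-2}$ rather than $(\log|t|^{-1})^{-1}$, which is ensured by the two explicit factors of $B^{-1}$ sandwiching the $O(1)$ quantity $C S^{-1} D$.
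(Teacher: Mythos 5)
Your proof is correct and is essentially the same argument the paper has in mind: the block Schur-complement inversion formula is precisely what block-wise Gaussian elimination on $(A,\mathbf{I}_g)$ produces, so "row reduction" and your explicit Neumann-series estimates are two phrasings of one computation. All the order estimates check out, including the key point that the correction in $B'$ is sandwiched between two factors of $B^{-1}$ and hence of order $(\log|t|^{-1})^{-2}$.
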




\section{Convergence Theorem}
\label{secConvergenceTheorem}
\subsection{Convergence on $\X^\hyb$}
\label{subsecConvergenceTheoremScriptX}
In this section, we prove Theorems \ref{mainThm2} and \ref{mainThm1}.

Suppose that $X$ has semistable reduction and let $\X$ is an nc model of $X$.  
Let $\mu_t$ denote the Bergman measure on $X_t$.

\subsection{Bergman measure on $\tilde{\X_{0,\red}}$}
By the Bergman measure on $\tilde{\X_{0,\red}}$, we mean the sum of the Bergman measures on all positive genus connected components of $\tilde{\X_{0,\red}}$.
The Bergman measure on $\tilde{\X_{0,\red}}$ is given by the two-form $\frac{i}{2}\sum_{i=g'+1}^{g} \psi_i \wedge \overline{\psi_i}$ 
Let $\tilde{\mu}_0$ denote the pushforward of the Bergman measure on $\tilde{\X_{0,\red}}$ to $\X_{0,\red}$. 
 
The following lemma gives the contribution of the Dirac mass on the vertices of $\Gamma_\X$ in the limiting measure. 
\begin{lem}
\label{lemGenusMassOnVertices}
   Consider an open set $U \subset \X$ adapted to an irreducible component $E$ of $\X_0$ of multiplicity $a$, i.e.~$U \cap \X_0 = E \cap U$ and there exist coordinates $z,w$ on $U$ with $|z|,|w| < 1$ such that $E \cap U = \{z = 0\}$ and the projection $U \to \D$ is given by $(z,w) \mapsto z^a$ and $|z|, |w| < 1$ on $U$. Let $\chi$ be a compactly supported continuous function on $U$. Then, as $t \to 0$, 
$$\int_{U \cap X_t} \chi \mu_t \to  \int_{U \cap E} \chi \tilde{\mu}_0.$$ 
 \end{lem}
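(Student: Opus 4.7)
The plan is to start from the expression $\mu_t = \frac{i}{2}\sum_{j,k}(\overline{A(t)})^{-1}_{j,k}\,\theta_{j,t}\wedge\overline{\theta_{k,t}}$ and to combine the block asymptotics of $A^{-1}$ from Corollary~\ref{corAsymptoticsAInverse} with the local description of $\theta_{i,t}$ on an $E$-adapted chart from Section~\ref{subsecRelatingThetaThetaITAndPsi}. I would split the double sum into four pieces according to whether each of $j,k$ lies in $\{1,\dots,g'\}$ or in $\{g'+1,\dots,g\}$, corresponding to the blocks $B'$, $C'$, $D'$, $F'$ of $A^{-1}$.

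The first step is to pin down the local behaviour of $\theta_{i,t}$ on $U$. Expanding $\theta_i = \sum_{\alpha\ge a-1,\,\beta\ge 0} c^{(i)}_{\alpha\beta}\,z^\alpha w^\beta\,dw\wedge dz$, one finds on each of the $a$ sheets of the covering $X_t\cap U \to \{|w|<1\}$ the estimate
$$\theta_{i,t}(w) \;=\; \tfrac{1}{a}\,\psi_i(w) \;+\; O(t^{1/a}),$$
uniformly on every set $\{|w|\le 1-\epsilon\}$. Since $\supp\chi$ is compact in $U$, I may shrink $U$ so that this bound holds uniformly on a neighbourhood of $\supp\chi$, and in particular $\frac{i}{2}\int_{U\cap X_t}\chi\,\theta_{j,t}\wedge\overline{\theta_{k,t}} = O(1)$ for all $j,k$.

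For the $B'$, $C'$, $D'$ blocks the entries of $A^{-1}$ are $O(1/\log|t|^{-1})$, so their combined contribution to $\int\chi\,\mu_t$ tends to $0$. For the $F'$ block I would apply the dominated convergence theorem sheet by sheet, together with $(F')_{jk}\to\delta_{jk}$, to obtain
$$\frac{i}{2}\int_{U\cap X_t}\chi\,\theta_{j,t}\wedge\overline{\theta_{k,t}} \;\longrightarrow\; \frac{1}{a}\int_{U\cap E}\chi\cdot \tfrac{i}{2}\psi_j\wedge\overline{\psi_k},$$
where the factor $1/a$ records that the $a$ sheets each contribute $1/a^2$ and then get summed.

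Finally I would invoke Remark~\ref{rmkMultiplicities}: since $X$ has semistable reduction, either the normalization of $E$ has positive genus, in which case $a=1$, or $E$ has genus $0$, in which case $\psi_j|_E\equiv 0$ for every $j>g'$ (as the $\psi_j$ are holomorphic on $\tilde E$ for such $j$). Either way, summing the diagonal $F'$-contributions and using that $\sum_{j>g'}\tfrac{i}{2}\psi_j|_E\wedge\overline{\psi_j|_E}$ coincides with $\tilde{\mu}_0$ on $U\cap E$ (by the construction of the basis and of $\tilde{\mu}_0$) gives exactly $\int_{U\cap E}\chi\,\tilde{\mu}_0$. The main subtlety will be the bookkeeping of the multiplicity $a$: the $a$-sheeted covering introduces a factor $1/a$ which is not automatic, and Remark~\ref{rmkMultiplicities} is precisely what is needed to absorb it — either trivially because $a=1$, or vacuously because the limiting integrand vanishes. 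The rest is a routine assembly of the asymptotics established in Section~\ref{secAsymptotics}.
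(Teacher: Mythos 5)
Your proposal is correct and follows essentially the same route as the paper's proof: the same block decomposition of $(\overline{A(t)})^{-1}$ via Corollary \ref{corAsymptoticsAInverse}, the bound $\theta_{i,t}$ bounded on the support of $\chi$ to kill the $O(1/\log|t|^{-1})$ blocks, dominated convergence with $\theta_{i,t}\to\frac{1}{a}\psi_i$ and $F'\to\mathbf{I}_{g-g'}$ for the remaining block, and Remark \ref{rmkMultiplicities} to dispose of the factor $\frac{1}{a}$ (either $a=1$ or $\psi_j|_E\equiv 0$ for $j>g'$). The bookkeeping of the $a$-sheeted cover that you single out as the main subtlety is handled exactly as in the paper.
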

\begin{proof}
Recall that $\mu_t = \frac{i}{2}\sum_{j,k} (\overline{A(t)})^{-1}_{j,k} \theta_{j,t} \wedge \overline{\theta_{k,t}}$.

If either $j \leq g'$ or $k \leq g'$, then $\overline{A(t)}^{-1}_{j,k} = O\left(\frac{1}{\log|t|^{-1}}\right)$, $\theta_{i,t}$ is bounded on $U$ and using Corollary \ref{corAsymptoticsAInverse},
$$\int_{U \cap X_t} \chi \cdot (\overline{A(t)})^{-1}_{j,k} \theta_{j,t} \wedge \theta_{k,t} = O\left(\frac{1}{\log|t|^{-1}}\right)$$ and hence goes to 0 as $t \to 0$.

  Therefore, we only need to worry about the terms for which $j,k > g'$. Recall that $\theta_{i,t} \to \frac{1}{a} \psi_i$ as $t \to 0$. Using a similar computation as in the proof of Lemma \ref{lemAsymptotics}, we get that 
  $$\lim_{t \to 0} \int_{U \cap X_t} \chi \mu_t =  \frac{1}{a }\int_{U \cap E} \chi \cdot \left(\sum_{j,k = g'+1}^{g}(\lim_{t \to 0}\overline{F'(t)_{j,k}}) \cdot \psi_j \wedge \overline{\psi_k}\right),$$ where $F'$ is the matrix from Corollary \ref{corAsymptoticsAInverse}.
  Since $\lim_{t \to 0} F'(t)  = \mathbf{I}_{g-g'}$, we get that
  $$\lim_{t \to 0} \int_{U \cap X_t} \chi \mu_t =  \int_{U \cap E} \chi \cdot \left(\frac{1}{a}\sum_{i=g'+1}^{g} \psi_i \wedge \overline{\psi_i}\right),$$

Using Remark \ref{rmkMultiplicities}, we get that $\psi_i = 0$ unless $a = 1$ for $g - g' + 1\leq i \leq g$. Thus, 
  $$\lim_{t \to 0} \int_{U \cap X_t} \chi \mu_t =  \int_{U \cap E} \chi \cdot \left(\sum_{i=g'+1}^{g} \psi_i \wedge \overline{\psi_i}\right),$$

The right-hand side is exactly $\int_{U \cap E} \chi \tilde{\mu}_0$.  
\end{proof}

In the following lemma, the first term on the right-hand side contributes to the Lebesgue measure in $\mu_0$ while the second term contributes to the Dirac mass in $\mu_0$. 
\begin{lem}
\label{lemKey}  
  Let $U \subset \X$ be an open set adapted to a node $P = E_1 \cap E_2$ in $\X_{0,\red}$, where $E_1, E_2$ are irreducible components of $\X_0$ with multiplicities $a,b$ respectively. Let $\chi$ be a compactly supported function on $U$ and let $f$ be a continuous function on $[0,\frac{1}{ab}]$.  Write the coordinates in $U$ as $z,w$ with $|z|,|w| <1$, $E_1 = \{z = 0\}$, $E_2 = \{w = 0\}$ and the projection to $\D$ given by $(z,w) \mapsto t = z^aw^b$. Let the coordinate on $X_t \cap U$ be  $w$ with $|t|^{1/2b} < |w| < 1$.
  Then, as $t \to 0$ we have that 
  \begin{multline*}
    \int_{|t|^{1/2b} < |w| < 1} \chi \cdot \left(f \circ \Log_U\left(\left(\frac{t}{w^b}\right)^{1/a},w\right)\right) \mu_t \to \\
    \chi(P)\cdot \frac{1}{l_{e_P} + r_{e_P}} \cdot \int_{0}^{1/2ab} f(u)du  + f(0) \cdot \int_{U \cap E_1} \chi \tilde{\mu}_0,
\end{multline*}
 where $\frac{1}{l_{e_P}+r_{e_P}}$ is the coefficient of $dx|_{e_P}$ in the Zhang measure. (See Section \ref{subsecZhangMeasure} for details.) 
\end{lem}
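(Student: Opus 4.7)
\emph{Plan.} I would expand the Bergman $(1,1)$-form in the basis $\theta_{j,t}$ as $\mu_t = \tfrac{i}{2}\sum_{j,k}(\overline{A(t)})^{-1}_{jk}\theta_{j,t}\wedge\overline{\theta_{k,t}}$ and decompose the sum using the block structure of $A(t)^{-1}$ from Corollary \ref{corAsymptoticsAInverse}: the $B'$-block $1\le j,k\le g'$, the $F'$-block $g'<j,k\le g$, and the mixed $C',D'$-blocks. The aim is to show that the $B'$-block produces the Lebesgue contribution on the half-edge, the $F'$-block produces the Bergman contribution on $E_1$, and the mixed blocks are $o(1)$.

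\emph{Mixed and $F'$ blocks.} For the mixed $j\le g'<k$ terms, the factor $\theta_{j,t}(w)=\frac{C_j^P}{aw}dw+O(1)$ from Section \ref{subsecRelatingThetaThetaITAndPsi} combines with the uniformly bounded factor $\theta_{k,t}$ to give density $O(|w|^{-1})$; its integral over $\{|t|^{1/2b}<|w|<1\}$ is $O(1)$, and the $O(1/\log|t|^{-1})$ coefficient from $C'$ or $D'$ makes the contribution $o(1)$. For the $F'$-block I would mimic Lemma \ref{lemGenusMassOnVertices}: pointwise $\theta_{j,t}\to\frac{1}{a}\psi_j$, $F'_{jk}\to\delta_{jk}$, and for fixed $w\ne 0$ one has $\Log_U((t/w^b)^{1/a},w)\to v_{E_1}$, so $f\circ\Log_U$ tends to the constant value of $f$ at $v_{E_1}$. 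Dominated convergence, the factor $a$ accounting for the $a$-sheeted cover $X_t\cap U\to\{|t|^{1/b}<|w|<1\}$, and Remark \ref{rmkMultiplicities} (which forces $\psi_j|_{E_1}=0$ for $j>g'$ whenever $a>1$) then deliver $f(v_{E_1})\cdot\int_{U\cap E_1}\chi\,\tilde\mu_0$.

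\emph{$B'$-block.} This is the heart of the argument. Substituting the expansion $\theta_{j,t}=\frac{C_j^P}{aw}dw+O(1)$ and $(B'(t))_{jk}=\frac{1}{2\pi\log|t|^{-1}}\delta_{jk}+O((\log|t|^{-1})^{-2})$, and invoking the identity $\sum_{j=1}^{g'}|C_j^P|^2=\frac{1}{l_{e_P}+r_{e_P}}$ from Proposition \ref{propBF11} together with Lemma \ref{lemHermitianPairing}, the leading contribution — after the $a$-sheeted covering factor and polar coordinates $w=re^{i\theta}$ — takes the form
\[ \frac{1}{2\pi a\log|t|^{-1}\,(l_{e_P}+r_{e_P})}\int_0^{2\pi}\!\!\int_{|t|^{1/2b}}^{1}\chi(re^{i\theta})\,f(\Log_U(r))\,\frac{dr\,d\theta}{r}. \]
The change of variable $u=\Log_U(r)=\frac{1}{ab}-\frac{\log r}{a\log|t|}$ turns $\frac{dr}{r}$ into $a\log|t|^{-1}\,du$, cancelling the log factors and converting the $r$-range into the half-edge $u$-range that matches the statement. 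For each fixed $u$ one has $r(u)\to 0$ as $t\to 0$, so $\chi(r(u)e^{i\theta})\to\chi(P)$ pointwise; dominated convergence and $\int_0^{2\pi}d\theta=2\pi$ then give $\chi(P)\cdot\frac{1}{l_{e_P}+r_{e_P}}\int f(u)\,du$. The $O((\log|t|^{-1})^{-2})$ subleading part of $B'$ and the $O(|w|^{-1})$ correction from the tail of $\theta_{j,t}$ contribute $o(1)$ by the same type of bound as in the mixed-term estimate.

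\emph{Main obstacle.} The subtle step is the $B'$-block calculation: arranging the log factors so that they cancel exactly under the change of variable $u=\Log_U$, and justifying the limit $\chi(r(u)e^{i\theta})\to\chi(P)$ uniformly enough to apply dominated convergence. The mixed-term and $F'$-block analyses are direct extensions of the techniques already developed in Lemmas \ref{lemAsymptotics} and \ref{lemGenusMassOnVertices}.
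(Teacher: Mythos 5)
Your proposal mirrors the paper's own proof in every essential respect: the same three-way decomposition of $A(t)^{-1}$ into the $B'$-, $F'$-, and mixed blocks, the same local asymptotics for $\theta_{j,t}$ near the node from Section~\ref{subsecRelatingThetaThetaITAndPsi}, and the same dominated-convergence argument after the polar/logarithmic change of variable. One detail to reconcile: with the Section~\ref{secHybridSpace} convention $\Log_U(z,w)=\frac{\log|z|}{b\log|t|}$ (so $v_{E_1}\leftrightarrow\frac{1}{ab}$), your substitution $u=\Log_U=\frac{1}{ab}-\frac{\log r}{a\log|t|}$ actually gives $u\in[\frac{1}{2ab},\frac{1}{ab}]$ and the $F'$-block evaluates $f$ at $\frac{1}{ab}$, not the $\int_0^{1/2ab}$ and $f(0)$ that appear in the lemma's stated right-hand side --- the paper's own proof silently replaces $\Log_U$ by $\frac{\log|w|}{a\log|t|}=\frac{1}{ab}-\Log_U$, i.e.\ reverses the orientation of the edge, so your formula and the lemma's agree after that reparametrization and your claim that the $u$-range ``matches the statement'' is off by this flip.
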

\begin{proof}
  

  To analyze the integral in the left-hand side of the lemma, we first note that
  $$\int_{|t|^{1/2b} < |w| < 1} \chi (f \circ \Log_U)\mu_t =
\frac{i}{2}  \sum_{j,k = 1}^g (\overline{A(t)})^{-1}_{j,k} \int_{|t|^{1/2b} < |w| < 1} \chi (f \circ \Log_U)  \theta_{j,t} \wedge \overline{\theta_{k,t}}$$ and then  analyze each of the terms. To do this, we break them up into three cases.
  \begin{itemize}
  \item ($j \leq g'$ and $k > g'$) or ($j > g'$ and $k < g'$)
  \item $j,k \leq g'$
    \item $j,k > g'$
  \end{itemize}

  We will prove that the first case does not contribute at all in the limit, the second case contributes the first term in right-hand side of the Lemma and the third case contributes the second term. 

  For the first case, note that if $j \leq g'$ and $k > g'$, then $(\overline{A(t)})^{-1}_{j,k} = O\left(\frac{1}{\log|t|^{-1}}\right)$.
  Since $|t| < |w|^{2b} $ on the region that we are integrating on, we see from the power series expansion that 
\begin{eqnarray*}
\theta_{j,t} =& \left(\frac{C_{j}}{aw} + O(1)\right) dw \\
\theta_{k,t} =& O(1) dw
\end{eqnarray*}

 where the $O(1)$ above are with respect to $|w|$ as $|w| \to 0$ uniformly in $t$. If we do a change of coordinates $w = re^{i\theta}$, we have that 
$$ 
\theta_{j,t} \wedge \overline{\theta_{k,t}} = O\left(\frac{1}{|w|}\right) dw  \wedge d\overline{w} = O(1) dr d\theta
$$
where the last $O(1)$ is with respect to $r$ as $r \to 0$. 
  
Thus we see that
$$(\overline{A(t)})^{-1}_{j,k}\int_{|t|^{1/2b} < |w| <1} \chi (f \circ \Log_U)  \theta_{j,t} \wedge \overline{\theta_{k,t}}   = O\left(\frac{1}{\log|t|^{-1}}\right).$$
By symmetry, the same holds when $j > g'$ and $k \leq g'$.

 Now consider the second case when $j,k \leq g'$. Then, $$ \overline{A(t)}^{-1}_{j,k} = \frac{\delta_{jk}}{2\pi\log|t|^{-1}} + O\left(\frac{1}{(\log|t|^{-1})^2}\right)$$ and
 $$\frac{i}{2}\theta_{j,t}\wedge \overline{\theta_{k,t}} = \frac{i}{2}\left(\frac{C_j\overline{C_k}}{a^2|w|^2} + O\left(\frac{1}{|w|}\right)\right) dw \wedge d\overline{w}
 = \left(\frac{C_j\overline{C_k}}{a^2r} +  O(1)\right) dr d\theta,$$

 First note that if $j \neq k$ and $j,k \leq g'$, then,
 \begin{eqnarray*}
   (\overline{A(t)})^{-1}_{j,k}\int_{|t|^{1/2b} < |w| < 1} \chi (f \circ \Log_U) \theta_{j,t} \wedge \overline{\theta_{k,t}}
   &= O\left(\frac{1}{(\log|t|^{-1})^2} \int_{|t|^{1/2b} < r < 1}  \frac{dr}{r}  \right) \\
   &= O\left( \frac{1}{\log|t|^{-1}} \right) \to 0 \quad \text{ as } t \to 0.
\end{eqnarray*}

 If $j\leq g' $, then,
\begin{multline*}
 \frac{i}{2} (\overline{A(t)})^{-1}_{j,j}\int_{|t|^{1/2b} < |w| < 1} \chi (f \circ \Log_U) \theta_{j,t} \wedge \overline{\theta_{j,t}} \\
  = \frac{1}{2a\pi\log|t|^{-1}}\int_{|t|^{1/2b} < r < 1} \chi f\left(\frac{\log r}{a\log|t|}\right)\frac{|C_j|^2 dr d\theta}{r} + O\left( \frac{1}{\log|t|^{-1}}\right).
\end{multline*}

 It is enough to figure out the limit of the integral on the right-hand side. To do this, consider a change of variable $u = \frac{\log r}{a\log |t|}$. Then, the integral on the right-hand side becomes
 $$ |C_j|^2 \int_0^{1/{2ab}} \int_{0}^{2\pi}\chi \cdot f(u) d\theta du.$$

 The integrand converges to $\chi(0,0) f(u)$ pointwise almost everywhere as $t \to 0$. Since the integrand is bounded, by the dominated convergence theorem, we have that $$\lim_{t \to 0} \frac{i}{2}(\overline{A(t)})^{-1}_{j,j}\int_{|t|^{1/2b} < |w| < 1} \chi (f \circ \Log_U) \theta_{j,t} \wedge \overline{\theta_{j,t}} = |C_j|^2 \chi(0,0) \int_{0}^{1/2ab} f(u) du .$$

 It follows from Proposition \ref{propBF11} that $\frac{1}{l_{e_P} + r_{e_P}} = \sum_{j=1}^{g'} |C_j|^2$. This gives us the first term on the right-hand side in the lemma.

 For the third case when $j,k > g'$, note that $\overline{A(t)}^{-1}_{j,k} = O(1)$ and $\theta_{j,t} \wedge \overline{\theta_{k,t}} = O(1)dw \wedge d\overline{w}$. Therefore, we can apply the dominated convergence theorem. The pointwise limit of the integrand as  $t \to 0$ is given by
 \begin{multline*}\overline{A(t)}_{j,k}^{-1} \cdot\chi\left(\frac{t}{w} ,w \right)  \cdot f\left( \frac{\log|w|}{a\log|t|} \right) \theta_{j,t} \wedge \overline{\theta_{k,t}}\to  \left(\lim_{t\to 0}\overline{F'(0)_{j,k}} \right) \cdot \chi(0,w) \cdot f(0) \cdot \frac{\psi_{j} \wedge \overline{\psi_k}}{a^2}.
 \end{multline*}

After interchanging the limit and the integral and using the fact that $\psi = 0$ unless $a = 1$ (see Remark \ref{rmkMultiplicities}), we get the second term on the right-hand side of the lemma. 
\end{proof}

\begin{cor}
  Let the notation be as in the Lemma \ref{lemKey}. Then, as $t \to 0$
  \begin{multline*}
    \int_{U \cap X_t} \chi (f \circ \Log_U) \mu_t = \\
    \chi(P) \frac{1}{l_{e_P} + r_{e_P}} \int_{0}^{1/ab} f(u) du + f(0) \int_{U \cap E_1} \chi \tilde{\mu_0} + f(1) \int_{U \cap E_1} \chi \tilde{\mu_0}.
  \end{multline*}
\end{cor}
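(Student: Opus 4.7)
The plan is to split $U \cap X_t$ into two annular halves and apply Lemma~\ref{lemKey} to each. Using the identity $|z|^a|w|^b = |t|$ that holds on $X_t$, the condition $|w| \geq |t|^{1/2b}$ is equivalent to $|z| \leq |t|^{1/2a}$, so setting
\begin{align*}
A_t &= \{(z,w) \in U \cap X_t : |t|^{1/2b} \leq |w| < 1\}, \\
B_t &= \{(z,w) \in U \cap X_t : |t|^{1/2a} < |z| \leq 1\},
\end{align*}
I obtain a disjoint decomposition $U \cap X_t = A_t \sqcup B_t$ (up to a middle circle of measure zero). It therefore suffices to compute the limit of the integral over each piece separately.

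The integral over $A_t$ is exactly what Lemma~\ref{lemKey} computes. For the integral over $B_t$, I would rerun the proof of Lemma~\ref{lemKey} with the roles of the two local coordinates exchanged: using $z$ as the parameter (with $|t|^{1/2a} < |z| < 1$) rather than $w$, the local expansion of $\theta_{j,t}$ in Section~\ref{subsecRelatingThetaThetaITAndPsi} is symmetric under $(z,a,E_1) \leftrightarrow (w,b,E_2)$, the residues $C^P_j$ reappear up to a sign (only $|C^P_j|^2$ enters the final computation), and the three-case analysis of the proof proceeds identically. The only changes are that $\Log_U$ now traverses the complementary half $[1/(2ab), 1/(ab)]$ of the edge $e_P$, and the pointwise limit in the third case localizes $\chi$ on $E_2$ rather than $E_1$. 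The outcome is the ``mirror'' statement
\begin{equation*}
\int_{B_t} \chi (f \circ \Log_U) \mu_t \to \chi(P) \cdot \frac{1}{l_{e_P}+r_{e_P}} \int_{1/(2ab)}^{1/(ab)} f(u)\, du + f\bigl(\tfrac{1}{ab}\bigr) \int_{U \cap E_2} \chi \tilde{\mu}_0.
\end{equation*}
Adding this to the limit of the $A_t$-integral and combining the Lebesgue terms via $\int_0^{1/(2ab)} + \int_{1/(2ab)}^{1/(ab)} = \int_0^{1/(ab)}$ yields the corollary.

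I expect the main point to watch is purely bookkeeping: verifying that the ``mirror'' version of Lemma~\ref{lemKey} really does apply on $B_t$ unchanged. The essential input is that the asymptotic formulas for $A(t)$ and $A(t)^{-1}$ from Lemma~\ref{lemAsymptotics} and Corollary~\ref{corAsymptoticsAInverse} are symmetric in the two branches meeting at $P$, so all coefficient estimates transfer without modification. Because the decomposition $U \cap X_t = A_t \sqcup B_t$ is disjoint, there is no overlap region to control, and the final summation is clean.
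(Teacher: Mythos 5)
Your proof is correct and matches the paper's: the paper's own proof also decomposes $U \cap X_t$ into the two annular halves $\{|t|^{1/2a}<|z|<1\}$ and $\{|t|^{1/2b}<|w|<1\}$, which are complementary on $X_t$ precisely because $|z|^a|w|^b=|t|$, and then applies Lemma~\ref{lemKey} together with its $(z,a,E_1)\leftrightarrow(w,b,E_2)$ mirror to the two pieces. Incidentally, your ``mirror'' statement correctly repairs two slips in the corollary as printed: the last term should read $f\bigl(\tfrac{1}{ab}\bigr)\int_{U\cap E_2}\chi\,\tilde{\mu}_0$ rather than $f(1)\int_{U\cap E_1}\chi\,\tilde{\mu}_0$, since $f$ is only defined on $[0,1/(ab)]$ and the second Dirac contribution localizes on $E_2$.
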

\begin{proof}
  Note that $$\int_{U \cap X_t} \chi (f \circ \Log_U) \mu_t = \int_{|t|^{1/2a} < |z| <1} \chi (f \circ \Log_U) \mu_t + \int_{|t|^{1/2b} < |w| <1} \chi (f \circ \Log_U) \mu_t.$$
  Applying the previous lemma for the both the terms on the right-hand side, we are done. 
\end{proof}

\begin{cor}
  Let $V = \bigcup_{i} U_i$ be a neighborhood of $\X_0$ where $U_i$ are adapted coordinate charts. Let $\chi_i$ be a partition of unity with respect to the cover $U_i$.  Let $\Log_V = \sum_{i} \chi_i\Log_{U_i}$ be a global log function on $V$. Let $f$ be a continuous function on $\Gamma$. Then, as $t \to 0$,
  $$ \int_{X_t} (f \circ \Log_V) \mu_t  \to \int_{\Gamma}f \mu_{Zh}.$$
\end{cor}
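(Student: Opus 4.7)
The plan is to decompose $\int_{X_t}(f\circ\Log_V)\mu_t$ using the partition of unity $\{\chi_i\}$, replace the global log function $\Log_V$ by the local adapted log $\Log_{U_i}$ on each chart (the error vanishing in the limit by Remark~\ref{rmkLogsAreSame} together with uniform continuity of $f$), apply Lemma~\ref{lemGenusMassOnVertices} or the corollary of Lemma~\ref{lemKey} on each chart, and sum the resulting contributions to recover the Zhang measure.

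First, since $\X\to\D$ is proper, for $|t|$ sufficiently small the whole fiber $X_t$ lies inside $V$, so
$$\int_{X_t}(f\circ\Log_V)\,\mu_t \;=\; \sum_i \int_{X_t}\chi_i\,(f\circ\Log_V)\,\mu_t.$$
By Remark~\ref{rmkLogsAreSame}, $\Log_V-\Log_{U_i}=O(1/\log|t|^{-1})$ uniformly on compact subsets of $U_i\cap V$; combined with uniform continuity of $f$ on the compact graph $\Gamma$ and the fact that $\mu_t$ has total mass $g$, this allows us to replace $f\circ\Log_V$ by $f\circ\Log_{U_i}$ in the $i$-th summand up to an error that tends to $0$ as $t\to 0$.

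For each chart with $\Log_V$ replaced by $\Log_{U_i}$ there are two cases. If $U_i$ is adapted to an irreducible component $E\subset\X_0$, then $\Log_{U_i}\equiv v_E$ so $f\circ\Log_{U_i}=f(v_E)$ is a constant, and Lemma~\ref{lemGenusMassOnVertices} yields
$$\int_{X_t}\chi_i\,(f\circ\Log_{U_i})\,\mu_t \;\longrightarrow\; f(v_E)\int_{U_i\cap E}\chi_i\,\tilde{\mu}_0.$$
If $U_i$ is adapted to a node $P$ on the edge $e_P$ between $v_{E_1}$ and $v_{E_2}$, the corollary of Lemma~\ref{lemKey} gives a limit of the form
$$\chi_i(P)\cdot\frac{1}{l_{e_P}+r_{e_P}}\int_{e_P}f\,dx \;+\; \bigl(\text{Dirac-at-endpoint terms } f(v_{E_j})\!\int_{U_i\cap E_k}\!\chi_i\,\tilde{\mu}_0\bigr).$$

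Summing over $i$ and using $\sum_i\chi_i\equiv 1$ on $\X_{0,\red}$, the edge contributions collect into $\sum_{e\in E(\Gamma)}\frac{1}{l_e+r_e}\int_e f\,dx$ (since the partition of unity evaluates to $1$ at each node $P$), while the Dirac contributions, from both the component-adapted charts and the endpoint terms of the node-adapted charts, collapse on each component $E$ to $f(v_E)\int_E\tilde{\mu}_0=f(v_E)\,g(v_E)$, using that the Bergman measure on the normalization of a positive-genus component has total mass equal to its genus and that genus-zero components contribute nothing. These two pieces together form exactly $\int_\Gamma f\,d\mu_{Zh}$, as defined in Section~\ref{subsecZhangMeasure}.

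The main technical obstacle is the bookkeeping needed to consolidate Dirac-mass contributions from node-adapted charts with those from component-adapted charts into the full Bergman mass on each positive-genus component; this is precisely where the identity $\sum_i\chi_i\equiv 1$ on $\X_{0,\red}$ is essential. Once this is handled, all remaining steps are direct applications of the preceding lemmas and the standard approximation argument for swapping $\Log_V$ with $\Log_{U_i}$.
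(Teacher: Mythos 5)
Your proposal is correct and follows essentially the same route as the paper: decompose via the partition of unity, swap $\Log_V$ for the local $\Log_{U_i}$ using Remark~\ref{rmkLogsAreSame}, apply Lemma~\ref{lemGenusMassOnVertices} and the corollary of Lemma~\ref{lemKey} chart by chart, and recombine using $\sum_i\chi_i\equiv 1$ together with $\int_E\tilde{\mu}_0=g(\tilde{E})$. The paper states the final recombination in a single sentence, while you spell out the bookkeeping; the substance is the same.
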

\begin{proof}
Note that $$\int_{X_t} (f \circ \Log_V) \mu_t = \sum_i \int_{U_i \cap X_t} \chi_i (f \circ \Log_V) \mu_t.$$ 
  
Since $\Log_V - \Log_U = O\left( \frac{1}{\log|t|^{-1}} \right)$, as $t \to 0$, $$\int_{U_i \cap X_t} \chi (f \circ \Log_V - f \circ \Log_{U_i}) \mu_t \to 0.$$

Therefore, the limit we are interested in is the same as the limit of
$$ \sum_i \int_{U_i \cap X_t} \chi_i (f \circ \Log_{U_i}) \mu_t.$$

The result just follows from the using the previous two lemmas and using that $\int_{E}\tilde{\mu_0} = g(\tilde{E})$ for all irreducible components $E$ of $\X_0$. 
\end{proof}

The following Corollary is equivalent to Theorem \ref{mainThm2}.
\begin{cor}
\label{corConvergenceHybridSpace}
Let $h$ be a continuous function on $\X^\hyb$. Then, $\int h \mu_t \to \int h \mu_{Zh}$ as $t \to 0$. 
\end{cor}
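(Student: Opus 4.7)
The plan is to reduce the case of a general continuous test function $h$ on $\X^\hyb$ to the case of a test function of the form $f \circ \Log_V$ already handled by the previous corollary, by approximating $h$ uniformly near $\Gamma_\X$ by its pullback from the dual graph via a global log function.

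First I would set $f := h|_{\Gamma_\X}$, which is continuous on $\Gamma_\X$. Fix any global log function $\Log_V$ defined on an open neighborhood $V$ of $\X_0$ in $\X$. The crucial topological input is that, by the definition of the topology on $\X^\hyb$ recalled in Section \ref{secHybridSpace}, a net $x_\alpha \in X$ with $\pi(x_\alpha) \to 0$ converges to a point $\gamma \in \Gamma_\X$ inside $\X^\hyb$ if and only if $\Log_V(x_\alpha) \to \gamma$. Combined with the continuity of $h$ on the compact set $\Gamma_\X$, this yields the following uniform estimate: for every $\epsilon > 0$ there exists an open neighborhood $W \subset \X^\hyb$ of $\Gamma_\X$ with $W \setminus \Gamma_\X \subset V$ and
\[
\bigl| h(x) - f\bigl(\Log_V(x)\bigr) \bigr| < \epsilon \qquad \text{for all } x \in W \setminus \Gamma_\X .
\]
This is the main point of the argument; I would prove it by a compactness/contradiction argument, extracting from any putative sequence of counterexamples a subsequence converging in $\X^\hyb$ to some $\gamma \in \Gamma_\X$, on which both $h(x_n)$ and $f(\Log_V(x_n))$ tend to $f(\gamma)$.

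Next, since $\pi : \X^\hyb \to \D$ is continuous and $\Gamma_\X = \pi^{-1}(0)$, we may shrink $W$ to assume $X_t \subset W$ for all $|t|$ sufficiently small. The Bergman measure $\mu_t$ has total mass $g$ on $X_t$, so
\[
\left| \int_{X_t} h\, \mu_t - \int_{X_t} (f \circ \Log_V)\, \mu_t \right| \leq g \epsilon
\]
for all such $t$. By the previous corollary, $\int_{X_t} (f \circ \Log_V)\, \mu_t \to \int_{\Gamma_\X} f\, \mu_{Zh}$ as $t \to 0$. Since $\int_{\Gamma_\X} f\, \mu_{Zh} = \int_{\X^\hyb} h\, \mu_{Zh}$ (the measure $\mu_{Zh}$ is supported on $\Gamma_\X$), letting $t \to 0$ and then $\epsilon \to 0$ gives $\int h\, \mu_t \to \int h\, \mu_{Zh}$.

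The only nontrivial step is the uniform approximation $|h - f \circ \Log_V| < \epsilon$ near $\Gamma_\X$; everything else is formal once the total mass of $\mu_t$ is known to be bounded. I would expect the approximation step to be quick given Remark \ref{rmkLogsAreSame}, which ensures that different choices of $\Log_V$ give the same topology, so the estimate is independent of the chosen log function. Theorem \ref{mainThm1} in the semistable case then follows by the projective limit description $X^\hyb = \varprojlim_\X \X^\hyb$ together with the compatibility of Zhang measures under retraction maps $\Gamma_{\X'} \to \Gamma_\X$ induced by blowups, and the general case follows by a base change $u \mapsto t^n$ as discussed in Section \ref{subsecSemistableReductionAndMinimalModel}.
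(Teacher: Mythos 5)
Your proof is correct and follows essentially the same strategy as the paper: set $\tilde h = f \circ \Log_V$ with $f = h|_{\Gamma_\X}$, invoke the previous corollary for $\tilde h$, and bound $\int(h - \tilde h)\,\mu_t$ by $\epsilon g$ once $|h - \tilde h| < \epsilon$ on $\pi^{-1}(r\D)$ for small $r$. The only cosmetic difference is that the paper establishes the uniform estimate by observing directly that $\tilde h$ extends continuously to $\X^\hyb$ (agreeing with $h$ on the compact fiber $\Gamma_\X$), rather than via your compactness/contradiction argument, but these are the same idea.
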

\begin{proof}
  Let $f = h|_{\Gamma}$ and let $\tilde{h} = f \circ \Log_V$. By the previous lemma, the result is true for $\tilde{h}$ i.e.~$\int \tilde{h} \mu_t \to \int f \mu_t$ as $t \to 0$. Thus, it is enough to show that $\int (h - \tilde{h}) \mu_t \to 0$ as $t \to 0$. Pick $\epsilon > 0$. Since $h - \tilde{h} = 0$ on $\Gamma$ and since $h - \tilde{h}$ is continuous on $\X^\hyb$, there exists $0 < r < 1$ such that $|h - \tilde{h}| < \epsilon$ on all $\pi^{-1}(r \D)$. Thus, $|\int(h-\tilde{h}) \mu_t| \leq \epsilon g$ for all $|t| < r$.  Letting $\epsilon \to 0$, we get that  $\int (h - \tilde{h
  }) \mu_t \to 0$ as $t \to 0$. 
\end{proof}

\subsection{Extending the convergence to $X^\hyb$}
The convergence theorem on $\X^\hyb$ has the drawback that it depends on the choice of a normal crossing model. To remedy this, we consider the convergence on $X^\hyb$. Recall that $X^\hyb =  \varprojlim \X^\hyb$ and does not depend on the choice of an nc model of $X$.
We would like to extend the convergence to $X^\hyb$ by patching the convergence results for $\X^\hyb$ for all nc models $\X$ of $X$.

To do this, note that we have a canonical measure, $\mu_{0,X}$ on $X_\Ct^\an = \varprojlim \Gamma_\X$ induced by the Zhang measure on all the $\Gamma_\X$'s. This follows from the fact that if $\X' \geq \X$ and  we consider the retraction $\Gamma_{\X'} \to \Gamma_\X$, then the pushforward of the Zhang measure on $\Gamma_{\X'}$ to $\Gamma_\X$ is the same as the Zhang measure on $\Gamma_\X$. The compatibility of these measures thus prove Theorem \ref{mainThm1} in the case when $X$ has a semistable reduction.

\subsection{Ground field extension}
Now we need to treat the general case of the Theorem \ref{mainThm1} i.e.~the case when $X$ does not necessarily have semistable reduction. To do this, note that after performing a base change by $\D^* \to \D^*$ given by $u \mapsto u^n$, $X$ will have semistable reduction (see Section  \ref{subsecSemistableReductionAndMinimalModel}). So, we only need to understand what happens after we perform such a base change. 

So consider the map $\D^* \to \D^*$ given by $u \mapsto u^n$. Let $Y$ be the base change of $X$ along this map i.e.~we have a Cartesian diagram
$$
\begin{tikzcd}
  Y \ar[r] \ar[d]& X \ar[d] \\
  \D^* \ar[r, "u \mapsto u^n"] & \D^*
\end{tikzcd}.
$$
At the level of varieties, this corresponds to doing a base field extension $\Ct \to \Cu$ and $Y_\Cu = X_\Ct \times_{\Ct} \Spec\Cu$. Thus, we have a surjective map $Y^\an_\Cu \to X^\an_\Ct$. This map is compatible with $Y \to X$ in the sense that the map $Y^\hyb \to X^\hyb$ is continuous. 
We would like to relate the convergence of Bergman measures on $X_t$ to the convergence of Bergman measures on $Y_u$.

Note that if $X$ has a semistable model, then so does $Y$. To see this, pick the minimal nc model of $\X$ and base change it to get a model $\tilde{\Y}$ of $Y$. The model $\tilde{\Y}$ is not regular, but can be made regular after blowing up at each singular point $\lfloor \frac{n}{2}\rfloor$ times to get  a model $\Y$ of $Y$. Then $\Y$ is the minimal nc model of $Y$. It is easy to see that under the base change operation,  $\Gamma_\Y$ is obtained by scaling the lengths of all edges in $\Gamma_\X$ by a factor of $n$. Thus, we see that the Zhang measure on $\Gamma_Y$ is compatible with the Zhang measure on $\Gamma_X$ if assume that $X$ has a semistable reduction. Similarly, the Zhang measures on $Y^\an_\Cu$ and $X^\an_\Ct$ are compatible if we assume that $X$ has a semistable reduction.

Note that this not necessarily true if $X$ does not have semistable reduction. Starting with $X$, we can always perform a suitable base change so that $Y$ has semistable reduction. Let $\Y$ and $\X$ be nc models of $Y$ and $X$ respectively. Then, we have a map $Y^\an_\Cu \to X^\an_\Ct$, which gives rise to a local isometry $\Gamma_{\Y} \to \Gamma_{\X}$. Let $\mu_0$ be the Zhang measure on $Y^\an_\Cu$. Since the map $p : Y^\hyb \to X^\hyb$ is continuous, we get that the Bergman measure $\mu_t$ on $X_t$ converge to the pushforward measure $p_*(\mu_0)$ supported on the image of $\Gamma_\Y$ in $X_\Ct^\an$, thus completing the proof of  Theorem \ref{mainThm1}.

\section{Metrized curve complex hybrid space}
\label{secCCConvergence}
In this section, we prove Theorem \ref{mainThm3}. To do this, we first construct the metrized curve complex hybrid space. Let $X \to \D^*$ be a family of curves with semistable reduction.  Let $\X$ be an nc model of $X$. 

\subsection{Metrized curve complexes}
The \emph{metrized curve complex}, $\Delta_\CC(\X)$ , associated to $\X$ is a topological space which is obtained from $\tilde{\X_{0,\red}}$ by adding line segments joining the points that lie over the same nodal point. More precisely,
$$ \Delta_\CC(\X) =  \left( \tilde{\X_{0,\red}} \sqcup \bigsqcup_{e \in E(\Gamma_\X)}[0,l_{e}]\right)/\sim ,$$
where $P' \sim 0$ and $P'' \sim l_{e_P}$ for $P',P''  \in \tilde{\X_{0,\red}}$ that lie over a node $P$ and $0,l_{e_P} \in [0,l_{e_P}]$.
We call the image of an irreducible component of $\X_{0,\red}$ as a \emph{curve} in $\Delta_\CC(\X)$ and the image of $[0,l_{e}]$ as an \emph{edge} in $\Delta_\CC(\X)$.

We have a continuous map $\Delta_\CC(\X) \to \X_{0,\red}$ obtained by collapsing all the edges of $\Delta_\CC(\X)$ to the associated nodes. We also have a continuous map $\Delta_\CC(\X) \to \Gamma_\X$   obtained by collapsing the curves to the associated vertices.

We define a measure $\mu_\CC$ on $\Delta_\CC(\X)$ as follows. Let $\tilde{\mu_0}$ denote the Bergman measure on the positive genus components of $\tilde{\X_{0,\red}}$.
$$ \mu_\CC = \tilde{\mu_0} + \sum_{e \in E(\Gamma_\X)}\frac{dx|_{e}}{l_{e} + r_e} ,$$

where $\frac{1}{l_e + r_e}$ is the coefficient that shows up in the Zhang measure(see Section \ref{subsecZhangMeasure}) and $dx|_e$ is Lebesgue measure on the edge $e$ normalized to have length $l_e$.

We say that a point $Q \in \Delta_\CC(\X)$
\begin{itemize}
\item is in the \emph{interior of a curve} if it lies on a curve but not on an edge.  
\item is in the \emph{interior of an edge} if it lies on an edge but not on a curve.
\item is an \emph{intersection point} if lies on a curve as well as an edge.
\end{itemize}


\subsection{Curve complex hybrid space}
\label{subsecCurveComplexHybridSpace}
We define the \emph{curve complex hybrid space}, $\X^\hyb_\CC$, which as a set is given by
$$ \X^\hyb_\CC = X \sqcup \Delta_\CC(\X).$$
We declare the topology on $\X^\hyb$ to be the weakest topology satisfying the  following.
\begin{itemize}
\item $X \hookrightarrow \X_\CC^\hyb$ is an open immersion.
\item $\X_\CC^\hyb \to \X$ given by collapsing all edges in $\Delta_\CC(\X)$ is a continuous map.
\item $\X_\CC^\hyb \to \X^\hyb$ given by collapsing all curves in $\Delta_\CC(\X)$ to points is a continuous map.
\end{itemize}

We now describe a neighborhood basis of a point $Q \in \Delta_\CC(\X)$.
\begin{itemize}
\item If $Q$ is an interior point of a curve, then an adapted coordinate chart centered at $Q$ gives a neighborhood basis of $Q$.
\item If $Q$ is an interior point of an edge, let $P$ denote the node associated to the edge containing $Q$. Let $U$ be an adapted neighborhood chart around $P$. Let $\alpha,\beta \in e_P \simeq [0,l_{e_P}]$ such that $\alpha < Q < \beta$. If we view $(\alpha,\beta) \subset [0,l_{e_P}]$, then,
  $$ \{x \in U \setminus \X_0 \mid \Log_U(x) \in (\alpha,\beta) \} \cup (\alpha,\beta) $$
  is a neighborhood of $Q$. As we vary $U,\alpha$ and $\beta$, we get a neighborhood basis of $Q$.
\item
\label{Test}
If $Q$ is an intersection point, let $P$ denote the node associated to the edge containing $Q$. Let $U$ be an adapted coordinate chart centered at $P$ with coordinates $z,w$ with $|z|, |w| < 1$ such that the projection $\X \to \D$ is given by $(z,w) \mapsto z^aw^b$. Let $E_1 \cap U = \{ z = 0 \}$ and $E_2 \cap U = \{ w = 0 \}$, where $E_1,E_2$ are irreducible components of $\X_{0,\red}$. WLOG, assume that $\tilde{E_1}$ is the irreducible component of $\tilde{\X_{0,\red}}$ containing $Q$. We identify $e_P \simeq [0,\frac{1}{ab}]$ with $v_{E_1}$ identified with 0. Pick $0 < \epsilon < \frac{1}{2ab}$. Then,
  $$ \left\{ (z,w) \in U \setminus \X_0 \ \Big\vert \ \frac{\log |w|}{a\log |t|} < \epsilon  \right\} \cup (E_1 \cap U)  \cup [0,\epsilon)$$
is a neighborhood of $Q$. Varying $U$ and $\epsilon$, we get a neighborhood basis of $Q$. 
\end{itemize}

\subsection{Convergence of Bergman measures}
To show that the Bergman measures $\mu_t$ on $X_t$ converge to $\mu_\CC$ on $\Delta_\CC(\X)$, we can use a partition of unity argument to reduce the problem to studying the convergence on a neighborhood of each point in $\Delta_\CC(\X)$.

Consider a point $Q$ and consider a neighborhood $V$ of $Q$ as described at the end of Section \ref{subsecCurveComplexHybridSpace}. We need to show that the measures $\mu_t$ on $X_t \cap V$ converges weakly to $\mu_\CC$ on $\Delta_\CC(\X) \cap V$.

If $Q$ is an interior point of a curve, then this computation has been worked out in Lemma \ref{lemGenusMassOnVertices}. If $Q$ is an interior point of an edge, then a minor modification of Lemma \ref{lemKey} yields the result. So, it remains to prove the result in the result in the case when $Q$ is an intersection point.

\begin{lem}
  Let $Q$ be an intersection point in $\Delta_\CC(\X)$ and let $V$ be a neighbourhood of $Q$ in $\X^\hyb_\CC$ mentioned at the end of Section \ref{subsecCurveComplexHybridSpace}. Let $f$ be a continuous compactly supported function on $V$. Then, as $t \to 0$,
  $$ \int_{V \cap X_t} f \mu_t \to \int_{V \cap \Delta_\CC(\X)} f \mu_\CC.$$
\end{lem}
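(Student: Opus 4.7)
The plan is to adapt the three-case analysis from the proof of Lemma \ref{lemKey} to the new integration region $V \cap X_t$ and the new test function $f$. Fix $\theta_1,\dots,\theta_g$ as in Lemma \ref{lemGrauert}, normalized so that the restrictions $\psi_i = \theta_i|_{\X_{0,\red}}$ satisfy Lemma \ref{lemHermitianPairing}, and write $\mu_t = \frac{i}{2}\sum_{j,k}\overline{A(t)}^{-1}_{j,k}\theta_{j,t}\wedge\overline{\theta_{k,t}}$. In the $w$-coordinate, $V \cap X_t$ is an $a$-sheeted cover of the annulus $\{|t|^{a\epsilon} < |w| < 1\}$, and the substitution $u = \log|w|/(a\log|t|)$ identifies the circle $|w| = |t|^{au}$ with the edge parameter $u \in (0,\epsilon)$ measured from $v_{E_1}$. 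Decompose the sum over $(j,k)$ into three cases exactly as in Lemma \ref{lemKey}: (a) mixed ($j \leq g' < k$ or $k \leq g' < j$); (b) both $\leq g'$; (c) both $> g'$.

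Case (a) vanishes by the same argument as in Lemma \ref{lemKey}: the matrix entries are $O((\log|t|^{-1})^{-1})$ and the $w$-integrals are bounded. For case (b), the off-diagonal terms are $o(1)$, while the diagonal terms have leading expansion $\theta_{j,t} \approx (C_j^P/aw)\,dw$; after the substitution, the factor $a$ from the sheeted cover combines with $\overline{A(t)}^{-1}_{j,j} \sim 1/(2\pi\log|t|^{-1})$ to cancel every power of $\log|t|$, leaving
\[ \sum_{j \leq g'} |C_j^P|^2 \int_0^\epsilon \frac{1}{2\pi}\int_0^{2\pi} f\bigl(z(|t|^{au}e^{i\theta}),\ |t|^{au}e^{i\theta}\bigr)\,d\theta\,du \ +\ o(1). \]
The key point is that for each fixed $u \in (0,\epsilon)$, every choice of branch of $z$, and every $\theta$, the point $(z,|t|^{au}e^{i\theta}) \in X_t$ converges as $t \to 0$ to the edge point of parameter $u$ in $\X^\hyb_\CC$: this is precisely what the topology of $\X^\hyb_\CC$ at interior edge points encodes, since a basic neighborhood pulls back to a tube of nearby $X_t$-points with $\Log_U$ near $u$, independently of $\theta$ and of the branch. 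Continuity of $f$ together with dominated convergence then drives the angular average to $f$ evaluated at the edge point $u$, and a second application of dominated convergence in $u$, combined with $\sum_{j \leq g'}|C_j^P|^2 = 1/(l_e + r_e)$ from Proposition \ref{propBF11}, yields the edge contribution $\frac{1}{l_e + r_e}\int_0^\epsilon f(u)\,du$.

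For case (c), if $E_1$ has genus zero then $\psi_j|_{E_1} = 0$ for all $j > g'$ and the contribution vanishes; otherwise $a = 1$ by Remark \ref{rmkMultiplicities}, so $\theta_{j,t}$ extends smoothly across $E_1 \cap U$ (using $C_j^P = 0$) and converges pointwise on $V \cap X_t$ to $\psi_j$, with a uniform bound controlled by the expansion computed in Section \ref{subsecRelatingThetaThetaITAndPsi}. Combined with $\overline{A(t)}^{-1}_{j,k} \to \delta_{jk}$ from Lemma \ref{lemAsymptotics} and dominated convergence, case (c) contributes $\sum_{j > g'}\frac{i}{2}\int_{V \cap E_1} f\,\psi_j \wedge \overline{\psi_j} = \int_{V \cap E_1} f\,\tilde{\mu}_0$. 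Summing the three cases gives $\int_{V \cap \Delta_\CC(\X)} f\,\mu_\CC$ as required. The main subtlety throughout is the uniform-in-angle, branch-independent convergence used in case (b); this is exactly what the definition of the topology on $\X^\hyb_\CC$ at interior edge points was designed to guarantee, and once it is in hand the rest reduces to a careful bookkeeping version of the calculations already carried out in Lemma \ref{lemKey}.
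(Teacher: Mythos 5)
The proof is correct, and its overall structure — the three-case decomposition into $(j\le g', k>g')$ and symmetric, $(j,k\le g')$, and $(j,k>g')$, together with the substitution $u = \log|w|/(a\log|t|)$ — is the same as the paper's. The one genuine difference is how the continuity of the test function is fed into the argument. The paper introduces an auxiliary function $h = f_0 \circ r$, where $r: \D\times[0,\epsilon)\to D$ is a strong deformation retract onto the half-dumbbell $D = (V\cap\Delta_\CC(\X))$; this $h$ agrees with $f$ on the central fiber, is designed so that its pointwise $t\to 0$ limit is $f_0$ by construction, and the analytic estimates are carried out for $h$ first; the paper then closes the argument by observing that $h - f$ is continuous on $V$, vanishes on $\Delta_\CC(\X)$, and hence is uniformly small on $V\cap X_t$ for small $t$, so $\int(f-h)\mu_t\to 0$. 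You instead apply dominated convergence directly to $f$, using the fact that for fixed $u\in(0,\epsilon)$, $\theta$, and branch, the point $(z,|t|^{au}e^{i\theta})\in X_t$ converges in $\X^\hyb_\CC$ to the edge point of parameter $u$; the needed pointwise limit of $f$ then follows from continuity in the $\X^\hyb_\CC$-topology, and the dominating function is $\|f\|_\infty$. Both routes work, both use the same essential ingredient (the design of the $\X^\hyb_\CC$-topology), and the computations themselves are identical; the paper's $h$-trick is a conceptual separation device rather than a necessity, while your more direct DCT argument is correctly identified as hinging on exactly the topological fact your last paragraph isolates. Two small points worth noting: the fact $\overline{A(t)}^{-1}_{jk}\to\delta_{jk}$ comes from Corollary \ref{corAsymptoticsAInverse} rather than Lemma \ref{lemAsymptotics}; and in case (c) the claimed uniform bound on $\theta_{j,t}$ over $\{|t|^{a\epsilon}<|w|<1\}$ does hold, but it uses both $C_j^P = 0$ and the constraint $\epsilon<1/(2ab)$ built into the neighborhood basis — it would be worth writing that estimate out, since it is what licenses the dominated convergence step there.
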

\begin{proof}
  Let $f$ be a compactly supported continuous function on $V$. Let $f_0 = f|_{V \cap \Delta_\CC(\X)}$.
  Note that $V \cap \Delta_\CC(\X)$ is homeomorphic to a \emph{half-dumbbell}
  $$D =  \{ (w,v) \in \D \times [0,\epsilon) \mid \text{Either } w = 0 \text{ or } v = 0  \} \subset \D \times [0,\epsilon).$$

Let $r : \D \times [0,\epsilon) \to D$ be a strong deformation retract.

Consider the compactly supported continuous function $h : V \to \R$ defined by
$$ h(z,w) = f_0\left(r\left(w,\frac{\log|w|}{a\log|t|}\right)\right)$$
for $(z,w) \in V \cap X$ and by
$$ h(x) = f_0(x)$$
for $x \in \Delta_\CC(\X)$.

We first prove that $$ \int_{V \cap X_t} h \mu_t \to \int_{V \cap \Delta_\CC(\X)} f_0 \mu_\CC.$$
To see this, recall the following facts from Sections \ref{secAsymptotics} and \ref{secConvergenceTheorem}:
$$\mu_t = \frac{i}{2} \sum_{j,k=1}^{g}\overline{A(t)^{-1}}_{j,k} \theta_{j,t} \wedge \overline{\theta_{k,t}},$$
where
$$\overline{A(t)^{-1}}_{j,k} \theta_{j,t} \wedge \overline{\theta_{k,t}} =
O\left( \frac{1}{\log|t|^{-1}} \right)$$
$\text{when either } j \leq g' \text{ and } k > g' \text{ or } j > g' \text{ and } k \leq g'$,
$$\overline{A(t)^{-1}}_{j,k} \theta_{j,t} \wedge \overline{\theta_{k,t}} =
\frac{1}{\log|t|^{-1}} \left( \frac{C_j \overline{C_k}\delta_{jk}}{a^2}\frac{dw \wedge d\overline{w}}{|w|^2} + O(1) \right)$$
when $j,k \leq g'$, and
$$\overline{A(t)^{-1}}_{j,k} \theta_{j,t} \wedge \overline{\theta_{k,t}} \to \delta_{j,k} \frac{\psi_j \wedge \overline{\psi_k}}{a^2}$$
when $j,k > g'$.

Thus, $\text{if either } j \leq g' \text{ and } k > g' \text{ or } j > g' \text{ and } k \leq g'$, then as $t \to 0$,
\begin{equation}
\label{eqnPfLemCCConvergence1}
 \overline{A(t)^{-1}}_{j,k}\int_{V \cap X_t} h  \theta_{j,t} \wedge \overline{\theta_{k,t}} \to 0 .
\end{equation}

We also get that
$$ \sum_{j,k=g'+1}^g \overline{A(t)^{-1}}_{j,k} \int_{V \cap X_t} h \theta_{j,t} \wedge \overline{\theta_{k,t}} \to \sum_{j=g'+1}^g \int_{U \cap E_1} \left( \lim_{t \to 0} h\left(\frac{t}{w},w\right) \right) \frac{\psi_j \wedge \overline{\psi_j}}{a^2} .$$

Note that $\lim_{t \to 0} h(\frac{t}{w},w) = \lim_{t \to 0} f_0(r(w,\frac{\log|w|}{a\log|t|})) = f_0(r(w,0)) = f_0(w,0)$. Note that $\psi = 0$ unless $a=1$ (see Remark \ref{rmkMultiplicities}). Also, recall that $\tilde{\mu_0} = \frac{i}{2}\sum_{j=g'+1}^g \psi_j \wedge \overline{\psi_j}$. Thus,
\begin{equation}
 \label{eqnPfLemCCConvergence2}
  \frac{i}{2}\sum_{j,k=g'+1}^g \overline{A(t)^{-1}}_{j,k} \int_{V \cap X_t} h \theta_{j,t} \wedge \overline{\theta_{k,t}} \to \int f_0 \tilde{\mu_0}.
\end{equation}

Now, it remains to consider the limit of 
$$ \frac{i}{2}\sum_{j,k=1}^{g'} \overline{A(t)^{-1}}_{j,k} \int_{V \cap X_t} h \theta_{j,t} \wedge \overline{\theta_{k,t}}  $$ 
as $t \to 0$.
But this is the same as the limit of
$$ \sum_{j=1}^{g'} \frac{|C_j|^2}{2\pi a^2\log|t|^{-1}} \int_{V \cap X_t} h \frac{dw \wedge d\overline{w}}{|w|^2}  = \sum_{j=1}^{g'} \frac{a|C_j|^2}{2\pi a^2\log|t|^{-1}} \int_{\{1 > |w| > |t|^{a\epsilon}  \}} h \frac{dw \wedge d\overline{w}}{|w|^2}  $$
as $t \to 0$. The factor $a$ appears on the right-hand side since $$V \cap X_t \to \{ w \in \D^* \mid |t|^{a\epsilon} < |w| <1 \}$$ is an $a$-sheeted cover.
Consider a change of variables $u = \frac{\log|w|}{a\log|t|}$ and $\theta = \arg(w)$. Then, the above integral is the same as
$$ \sum_{j=1}^{g'} \frac{|C_j|^2}{2\pi} \int_0^\epsilon \int_0^{2\pi} h\left(\frac{t}{|t|^{au}e^{i\theta}},|t|^{au}e^{i\theta}\right) d\theta du .$$

Note that
$$\lim_{t \to 0} h\left(\frac{t}{|t|^{au}e^{i\theta}},|t|^{au}e^{i\theta}\right) = \lim_{t \to 0} f_0\left(r\left(|t|^{au}e^{i\theta}, u\right)\right) = f_0(r(0,u)) = f_0(0,u)$$
almost everywhere for $u \in [0,\epsilon]$.
Also recall that $\sum_{j,k=1}^{g'} |C_j|^2 = \frac{1}{l_{e_P} + r_{e_P}}$. Thus, we get that
\begin{equation}
 \label{eqnPfLemCCConvergence3}
\frac{i}{2}\sum_{j,k=1}^{g'} \overline{A(t)^{-1}}_{j,k} \int_{V \cap X_t} h \theta_{j,t} \wedge \overline{\theta_{k,t}} \to \frac{1}{l_{e_P}+r_{e_P}}\int_0^{\epsilon} f_0 de. 
\end{equation}

Using Equations \eqref{eqnPfLemCCConvergence1}, \eqref{eqnPfLemCCConvergence2} and \eqref{eqnPfLemCCConvergence3}, we get that
$$ \int_{V \cap X_t} h \mu_t \to \int_{V \cap \Delta_\CC(\X)} f_0 \mu_\CC.$$

To show that $\int_{V \cap X_t} f \mu_t \to \int_{V \cap \Delta_\CC(\X)} f_0 \mu_\CC$, note that $h-f$ is a compactly supported continuous function on $V$ such that $(h - f)|_{\Delta_\CC(\X_0)} = 0$. Thus, given $\epsilon' > 0$, there exists an $t_0$ such that $|h-f| < \epsilon'$ on $V \cap X_t$ for $|t| <
|t_0|$. Thus,
$$\left|\int_{V\cap X_t} f \mu_t - \int_{V \cap X_t} h \mu_t\right| < \epsilon'g.$$
Taking $\epsilon' \to 0$, we get that
$$ \lim_{t \to 0}\int_{V \cap X_t} f \mu_t = \lim_{t \to 0}\int_{V \cap X_t} h \mu_t = \int_{V \cap \Delta_\CC(\X)} f_0 \mu_\CC.$$
\end{proof}

\bibliographystyle{halpha-abbrv}
\bibliography{biblio}

\begin{thebibliography}{{Sta}20**}
\expandafter\ifx\csname url\endcsname\relax
  \def\url#1{\texttt{#1}}\fi
\expandafter\ifx\csname doi\endcsname\relax
  \def\doi#1{\burlalt{doi:#1}{http://dx.doi.org/#1}}\fi
\expandafter\ifx\csname urlprefix\endcsname\relax\def\urlprefix{URL }\fi
\expandafter\ifx\csname href\endcsname\relax
  \def\href#1#2{#2}\fi
\expandafter\ifx\csname burlalt\endcsname\relax
  \def\burlalt#1#2{\href{#2}{#1}}\fi

\bibitem[AB15]{AB15}
O.~Amini and M.~Baker.
\newblock Linear series on metrized complexes of algebraic curves.
\newblock {\em Math. Ann.}, 362(1-2):55--106, 2015.
\newblock \doi{10.1007/s00208-014-1093-8}.

\bibitem[Ami14]{Ami14}
O.~Amini.
\newblock Equidistribution of {W}eierstrass points on curves over
  non-{A}rchimedean fields, 2014,
  \burlalt{arXiv:1412.0926}{http://arxiv.org/abs/arXiv:1412.0926}.

\bibitem[Ber90]{Ber90}
V.~G. Berkovich.
\newblock {\em Spectral theory and analytic geometry over non-{A}rchimedean
  fields}, volume~33 of {\em Mathematical Surveys and Monographs}.
\newblock American Mathematical Society, Providence, RI, 1990.
\newblock \doi{10.1090/surv/033}.

\bibitem[Ber09]{Ber09}
V.~G. Berkovich.
\newblock A non-{A}rchimedean interpretation of the weight zero subspaces of
  limit mixed {H}odge structures.
\newblock In {\em Algebra, arithmetic, and geometry: in honor of {Y}u. {I}.
  {M}anin. {V}ol. {I}}, volume 269 of {\em Progr. Math.}, pages 49--67.
  Birkh\"{a}user Boston, Inc., Boston, MA, 2009.
\newblock \doi{10.1007/978-0-8176-4745-2_2}.

\bibitem[Ber10]{Ber10}
B.~Berndtsson.
\newblock An introduction to things {$\overline\partial$}.
\newblock In {\em Analytic and algebraic geometry}, volume~17 of {\em IAS/Park
  City Math. Ser.}, pages 7--76. Amer. Math. Soc., Providence, RI, 2010.
\newblock \doi{10.1090/pcms/017}.

\bibitem[BF11]{BF11}
M.~Baker and X.~Faber.
\newblock Metric properties of the tropical {A}bel-{J}acobi map.
\newblock {\em J. Algebraic Combin.}, 33(3):349--381, 2011.
\newblock \doi{10.1007/s10801-010-0247-3}.

\bibitem[BFJ16]{BFJ16}
S.~Boucksom, C.~Favre, and M.~Jonsson.
\newblock Singular semipositive metrics in non-{A}rchimedean geometry.
\newblock {\em J. Algebraic Geom.}, 25(1):77--139, 2016.
\newblock \doi{10.1090/jag/656}.

\bibitem[BJ17]{BJ17}
S.~Boucksom and M.~Jonsson.
\newblock Tropical and non-{A}rchimedean limits of degenerating families of
  volume forms.
\newblock {\em J. \'{E}c. polytech. Math.}, 4:87--139, 2017.
\newblock \doi{10.5802/jep.39}.

\bibitem[BN16]{BN16}
M.~Baker and J.~Nicaise.
\newblock Weight functions on {B}erkovich curves.
\newblock {\em Algebra Number Theory}, 10(10):2053--2079, 2016.
\newblock \doi{10.2140/ant.2016.10.2053}.

\bibitem[BPR13]{BPR13}
M.~Baker, S.~Payne, and J.~Rabinoff.
\newblock On the structure of non-{A}rchimedean analytic curves.
\newblock In {\em Tropical and non-{A}rchimedean geometry}, volume 605 of {\em
  Contemp. Math.}, pages 93--121. Amer. Math. Soc., Providence, RI, 2013.
\newblock \doi{10.1090/conm/605/12113}.

\bibitem[BPR16]{BPR16}
M.~Baker, S.~Payne, and J.~Rabinoff.
\newblock Nonarchimedean geometry, tropicalization, and metrics on curves.
\newblock {\em Algebr. Geom.}, 3(1):63--105, 2016.
\newblock \doi{10.14231/AG-2016-004}.

\bibitem[BR10]{BR10}
M.~Baker and R.~Rumely.
\newblock {\em Potential theory and dynamics on the {B}erkovich projective
  line}, volume 159 of {\em Mathematical Surveys and Monographs}.
\newblock American Mathematical Society, Providence, RI, 2010.
\newblock \doi{10.1090/surv/159}.

\bibitem[BSW19]{BSW19}
H.~Baik, F.~Shokrieh, and C.~Wu.
\newblock Limits of canonical forms on towers of {R}iemann surfaces.
\newblock {\em Journal für die reine und angewandte Mathematik}, (0), 2019.
\newblock \doi{10.1515/crelle-2019-0007}.

\bibitem[Cle77]{Cle77}
C.~H. Clemens.
\newblock Degeneration of {K}\"{a}hler manifolds.
\newblock {\em Duke Math. J.}, 44(2):215--290, 1977.
\newblock \doi{10.1215/S0012-7094-77-04410-6}.

\bibitem[CLT10]{CLT10}
A.~Chambert-Loir and Y.~Tschinkel.
\newblock Igusa integrals and volume asymptotics in analytic and adelic
  geometry.
\newblock {\em Confluentes Math.}, 2(3):351--429, 2010.
\newblock \doi{10.1142/S1793744210000223}.

\bibitem[DF19]{DF19}
R.~Dujardin and C.~Favre.
\newblock Degenerations of {${\rm SL}(2,\Bbb C)$} representations and
  {L}yapunov exponents.
\newblock {\em Ann. H. Lebesgue}, 2:515--565, 2019.
\newblock \doi{10.5802/ahl.24}.

\bibitem[dJ19]{dJon19}
R.~de~Jong.
\newblock Faltings delta-invariant and semistable degeneration.
\newblock {\em J. Differential Geom.}, 111(2):241--301, 2019.
\newblock \doi{10.4310/jdg/1549422102}.

\bibitem[DKY20]{DKY19}
L.~DeMarco, H.~Krieger, and H.~Ye.
\newblock Uniform {M}anin-{M}umford for a family of genus 2 curves.
\newblock {\em Ann. of Math. (2)}, 191(3):949--1001, 2020.
\newblock \doi{10.4007/annals.2020.191.3.5}.

\bibitem[DM69]{DM69}
P.~Deligne and D.~Mumford.
\newblock The irreducibility of the space of curves of given genus.
\newblock {\em Inst. Hautes \'{E}tudes Sci. Publ. Math.}, (36):75--109, 1969.
\newblock \doi{10.1007/BF02684599}.

\bibitem[Don15]{Don15}
R.~X. Dong.
\newblock Boundary asymptotics of the relative {B}ergman kernel metric for
  elliptic curves.
\newblock {\em C. R. Math. Acad. Sci. Paris}, 353(7):611--615, 2015.
\newblock \doi{10.1016/j.crma.2015.04.015}.

\bibitem[Fav18]{Fav18}
C.~Favre.
\newblock {Degeneration of endomorphisms of the complex projective space in the
  hybrid space}.
\newblock {\em {Journal of the Institute of Mathematics of Jussieu}}, Aug.
  2018.
\newblock \doi{10.1017/S147474801800035X}.
\newblock 38 pages.

\bibitem[Gra60]{Gra60}
H.~Grauert.
\newblock Ein {T}heorem der analytischen {G}arbentheorie und die
  {M}odulr\"{a}ume komplexer {S}trukturen.
\newblock {\em Inst. Hautes \'{E}tudes Sci. Publ. Math.}, (5):64, 1960.
\newblock \doi{10.1007/BF02684746}.

\bibitem[Har77]{Har77}
R.~Hartshorne.
\newblock {\em Algebraic geometry}.
\newblock Springer-Verlag, New York-Heidelberg, 1977.
\newblock \doi{10.1007/978-1-4757-3849-0}.
\newblock Graduate Texts in Mathematics, No. 52.

\bibitem[HJ96]{HJ96}
L.~Habermann and J.~Jost.
\newblock Riemannian metrics on {T}eichm\"{u}ller space.
\newblock {\em Manuscripta Math.}, 89(3):281--306, 1996.
\newblock \doi{10.1007/BF02567518}.

\bibitem[HJ98]{HJ98}
L.~Habermann and J.~Jost.
\newblock Metrics on {R}iemann surfaces and the geometry of moduli spaces.
\newblock In {\em Geometric theory of singular phenomena in partial
  differential equations ({C}ortona, 1995)}, Sympos. Math., XXXVIII, pages
  53--70. Cambridge Univ. Press, Cambridge, 1998.

\bibitem[JN20]{JN19}
M.~Jonsson and J.~Nicaise.
\newblock Convergence of $p$-adic pluricanonical measures to {L}ebesgue
  measures on skeleta in~{B}erkovich spaces.
\newblock {\em Journal de l'\'Ecole polytechnique --- Math\'ematiques},
  7:287--336, 2020.
\newblock \doi{10.5802/jep.118}.

\bibitem[KS06]{KS06}
M.~Kontsevich and Y.~Soibelman.
\newblock Affine structures and non-{A}rchimedean analytic spaces.
\newblock In {\em The unity of mathematics}, volume 244 of {\em Progr. Math.},
  pages 321--385. Birkh\"{a}user Boston, Boston, MA, 2006.
\newblock \doi{10.1007/0-8176-4467-9_9}.

\bibitem[Lic68]{Lic68}
S.~Lichtenbaum.
\newblock Curves over discrete valuation rings.
\newblock {\em Amer. J. Math.}, 90:380--405, 1968.
\newblock \doi{10.2307/2373535}.

\bibitem[Liu02]{Liu02}
Q.~Liu.
\newblock {\em Algebraic geometry and arithmetic curves}, volume~6 of {\em
  Oxford Graduate Texts in Mathematics}.
\newblock Oxford University Press, Oxford, 2002.
\newblock Translated from the French by Reinie Ern\'{e}, Oxford Science
  Publications.

\bibitem[LS19]{LS19}
T.~Lemanissier and M.~Stevenson.
\newblock Topology of hybrid analytifications, 2019,
  \burlalt{arXiv:1903.01926}{http://arxiv.org/abs/arXiv:1903.01926}.

\bibitem[MN15]{MN15}
M.~Musta\c{t}\u{a} and J.~Nicaise.
\newblock Weight functions on non-{A}rchimedean analytic spaces and the
  {K}ontsevich-{S}oibelman skeleton.
\newblock {\em Algebr. Geom.}, 2(3):365--404, 2015.
\newblock \doi{10.14231/AG-2015-016}.

\bibitem[Nee84]{Nee84}
A.~Neeman.
\newblock The distribution of {W}eierstrass points on a compact {R}iemann
  surface.
\newblock {\em Ann. of Math. (2)}, 120(2):317--328, 1984.
\newblock \doi{10.2307/2006944}.

\bibitem[Oda18]{Oda17}
Y.~Odaka.
\newblock Tropical geometric compactification of moduli, {II: $A_g$} case and
  holomorphic limits.
\newblock {\em International Mathematics Research Notices}, 01 2018.
\newblock \doi{10.1093/imrn/rnx293}.

\bibitem[Poi13]{Poi13}
J.~Poineau.
\newblock Espaces de {B}erkovich sur {{\bf {{\relax Z}} }}: \'{e}tude locale.
\newblock {\em Invent. Math.}, 194(3):535--590, 2013.
\newblock \doi{10.1007/s00222-013-0451-6}.

\bibitem[PS19]{Sch19}
L.~Pille-Schneider.
\newblock Hybrid convergence of {Kähler-Einstein} measures, 2019,
  \burlalt{arXiv:1911.03357}{http://arxiv.org/abs/arXiv:1911.03357}.

\bibitem[Ric18]{Ric18}
D.~H. Richman.
\newblock The distribution of {W}eierstrass points on a tropical curve, 2018,
  \burlalt{arXiv:1809.07920}{http://arxiv.org/abs/arXiv:1809.07920}.

\bibitem[Rom13]{Rom13}
M.~Romagny.
\newblock Models of curves.
\newblock In {\em Arithmetic and geometry around {G}alois theory}, volume 304
  of {\em Progr. Math.}, pages 149--170. Birkh\"{a}user/Springer, Basel, 2013.
\newblock \doi{10.1007/978-3-0348-0487-5_2}.

\bibitem[Shi19]{Shi19}
S.~Shivaprasad.
\newblock Convergence of volume forms on a family of log-{C}alabi-{Y}au
  varieties to a non-{A}rchimedean measure, 2019,
  \burlalt{arXiv:1911.07307}{http://arxiv.org/abs/arXiv:1911.07307}.

\bibitem[Shi20]{Shi20}
S.~Shivaprasad.
\newblock 2020.
\newblock Preprint in preparation.

\bibitem[{Sta}20]{stacks-project}
{\relax The}~{Stacks project authors}.
\newblock The stacks project.
\newblock \url{https://stacks.math.columbia.edu}, 2020.

\bibitem[Sus18]{Sus18}
D.~Sustretov.
\newblock Gromov-{H}ausdorff limits of flat {R}iemannian surfaces, 2018,
  \burlalt{arXiv:1802.03818}{http://arxiv.org/abs/arXiv:1802.03818}.

\bibitem[SW19]{SW19}
F.~Shokrieh and C.~Wu.
\newblock Canonical measures on metric graphs and a {K}azhdan's theorem.
\newblock {\em Invent. Math.}, 215(3):819--862, 2019.
\newblock \doi{10.1007/s00222-018-0838-5}.

\bibitem[Zha93]{Zha93}
S.-W. Zhang.
\newblock Admissible pairing on a curve.
\newblock {\em Invent. Math.}, 112(1):171--193, 1993.
\newblock \doi{10.1007/BF01232429}.

\end{thebibliography}
\end{document}